\definecolor{ao}{rgb}{0.0, 0.5, 0.0}
\definecolor{darkblue}{rgb}{0,0,0.7} 
\definecolor{green}{RGB}{57,181,74} 
\definecolor{violet}{RGB}{147,39,143} 
\newcommand{\defn}[1]{{\emph{#1}}}
\newcommand{\Z}{\mathbb{Z}}
\newcommand{\K}{\mathbb{K}}
\newcommand{\Sym}{\mathbb{S}}
\newcommand{\eqdef}{:=}
\newcommand{\calA}{\mathcal{A}}
\newcommand{\calL}{\mathcal{L}}
\newcommand{\calP}{\mathcal{P}}
\newcommand{\calQ}{\mathcal{Q}}
\newcommand{\calR}{\mathcal{R}}
\newcommand{\calT}{\mathcal{T}}
\newcommand{\rmP}{\mathrm{P}}
\newcommand{\rmQ}{\mathrm{Q}}
\newcommand{\rmT}{\mathrm{T}}
\newcommand{\End}{\operatorname{End}}
\newcommand{\Curv}{\operatorname{Curv}}
\newcommand{\kurv}{\kappa}
\newcommand{\dgOp}{\operatorname{dg-Op}}
\newcommand{\cMult}{(\cAinf \downarrow \dgOp)}
\newcounter{dummy} \numberwithin{dummy}{section}
\newtheorem{theorem}[dummy]{Theorem}
\newtheorem{prop}[dummy]{Proposition}
\newtheorem{lemma}[dummy]{Lemma}
\newtheorem{corollary}[dummy]{Corollary}
\newtheorem*{thm*}{Theorem}
\newtheorem*{prop*}{Proposition}
\theoremstyle{definition}
\newtheorem{definition}[dummy]{Definition}
\newtheorem{def-prop}[dummy]{Definition-Proposition}
\numberwithin{equation}{section}
\theoremstyle{remark}
\newtheorem{remark}[dummy]{Remark}
\newcommand{\Ainf}{\ensuremath{\mathrm{A}_\infty}}
\newcommand{\dAinf}{\ensuremath{\mathrm{A}_\infty^{+}}}
\newcommand{\cAinf}{\ensuremath{\mathrm{cA}_\infty}}
\newcommand{\Linf}{\ensuremath{\mathrm{L}_\infty}}
\newcommand{\dLinf}{\ensuremath{\mathrm{L}_\infty^{+}}}
\newcommand{\cLinf}{\ensuremath{\mathrm{cL}_\infty}}
\newcommand{\MC}{\ensuremath{\mathrm{MC}}}
\newcommand{\tw}{\ensuremath{\eta}}
\newcommand{\Sh}{\ensuremath{\mathrm{Sh}}}
\newcommand{\Tot}{\ensuremath{\mathrm{Tot}}}
\title[A universal characterization of the curved $\Ainf$ and $\Linf$ operads]{A universal characterization of the \\ curved homotopy Lie and associative operads}
\author{Guillaume Laplante-Anfossi}
\author{Adrian Petr}
\author{Vivek Shende}
\begin{document}

\begin{abstract} 
We study the category of nonsymmetric dg operads valued in strict graded-mixed complexes, equipped with a distinguished arity zero weight one element which generates the weight grading, and whose differential has weight one.  We show that the initial object is the curved A-infinity operad, that the forgetful functor to the category of operads under it admits a right adjoint, and that the unit of the adjunction encodes the operation of twisting a curved A-infinity algebra by a Maurer-Cartan element. 

The corresponding notions for symmetric operads  characterize the  curved L-infinity operad and the corresponding twisting procedure. 
\end{abstract}

\maketitle\bibliographystyle{plain}





\section{Introduction}

The defining relations of two fundamental structures, degree shifted for sign convenience:

\begin{equation} 
\label{a infinity relation}
    d \mu_n (\cdots) = -\sum_{a + b = n}  \mu_{a+1}(\cdots, \mu_b( \cdots) , \cdots), \tag{$\Ainf$}
\end{equation}

\begin{equation} 
\label{l infinity relation}
    d \ell_n (\cdots) = -\sum_{a + b = n} \sum_{\substack{(b ; a) \\ \mathrm{shuffles}\, \sigma}} \ell_{a+1}(\ell_b(\cdots), \cdots) \circ \sigma^{-1} . \tag{$\Linf$} 
\end{equation}

We will write $\Ainf$ for the  nonsymmetric dg operad quasifreely generated by the arity $n$, degree $-1$ (homological conventions) operations $\mu_n$ for $n \ge 2$, and differential  \eqref{a infinity relation}.  
We write  $\Linf$ for the symmetric dg operad quasifreely generated by the $\ell_n$ for $n \ge 2$ and with differential \eqref{l infinity relation}. 
For a historical account of the origins and applications of these objects, we refer to a recent survey of Stasheff \cite{stasheff-then-and-now}.   

There are also `curved' variants $\cAinf$ and $\cLinf$ which differ only by including also operations $\mu_0, \mu_1$ and $\ell_0, \ell_1$, respectively.  
Let us mention that $\cAinf$ structures are fundamental in the study of Fukaya categories \cite{FOOO}, and that $\cLinf$ structures appear e.g.\ when considering equivariant deformation quantization \cite{Esposito-Nest-Schnitzer-Tsygan} and Lie algebroids \cite{CalaqueCamposNuiten21}.

Algebras over these operads, and others which admit morphisms from them, admit the procedure of ``twisting by a Maurer-Cartan element'', which plays a fundamental and essential role in deformation theoretic considerations; we refer to \cite{DotsenkoShadrinVallette23} for a book-length treatment and many further references. 

In  the present article we give universal characterizations of the above structures. 

\vspace{2mm}

\subsection{Universality}

Let $[\kurv]$ be the operad generated by a single operation $\kurv$ in arity $0$.
Given an operad $\rmP$, the coproduct $\rmP \vee [\kurv]$ is naturally equipped with a `weight' grading, by the number of $\kurv$ which appear in a given expression.

\begin{definition}
\label{curv}    
    We denote by $\Curv$ the category whose objects are given by the data of quadruples $(\rmQ, d_\rmQ, \kurv, \rmQ_0)$ where $(\rmQ, d_\rmQ)$ is a non-symmetric dg operad, $\kurv \in \rmQ \setminus \rmQ_0$ is a (homological) degree (-1) element, and $\rmQ_0$ is a sub-operad of the underlying graded (not dg) operad $\rmQ$.  
    They must satisfy the conditions:
    \begin{enumerate}
        \item \label{free weights} The natural morphism $\rmQ_0 \vee [\kurv] \to \rmQ$ is an isomorphism.
        \item \label{strict mixed} Under the resulting weight grading $\rmQ = \bigoplus \rmQ_i$ (the notation $\rmQ_0$ is not ambiguous), consider the splitting $d = \sum d_i$ where $d_i(\rmQ_j) \subset \rmQ_{i+j}$.  Then all $d_i$ vanish except $d_0, d_1$.  
        \item \label{d1 closed} $d_1 \kurv = 0$.
    \end{enumerate}    
    Morphisms $(\rmQ, d_\rmQ, \kurv, \rmQ_0) \to (\rmQ', d_{\rmQ'}, \kurv', \rmQ'_0)$ are maps of dg operads $(\rmQ, d_\rmQ) \xrightarrow{\Phi} (\rmQ', d_{\rmQ'})$ such that $\Phi(\kurv) = \kurv'$ and $\Phi(\rmQ_0) \subset \rmQ'_0$.

    We write $\Curv^\Sigma$ for the corresponding category of symmetric operads. 
\end{definition}
\vspace{2mm}

\begin{theorem} 
\label{thm:morphism}
    The initial object of $\Curv$ is $c\calA_\infty  := (\cAinf, d_{\cAinf}, \mu_0, \langle \mu_i \rangle_{i > 0})$.  
\end{theorem}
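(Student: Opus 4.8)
The plan is to construct an explicit isomorphism by exhibiting morphisms in both directions, with the main content being that the $\cAinf$-relations \eqref{a infinity relation} are forced by the three axioms in \cref{curv}. First I would verify that $c\calA_\infty = (\cAinf, d_{\cAinf}, \mu_0, \langle \mu_i\rangle_{i>0})$ is indeed an object of $\Curv$: the underlying graded operad $\cAinf$ is freely generated by $\mu_0, \mu_1, \mu_2, \dots$, so setting $\rmQ_0 = \langle \mu_i\rangle_{i>0}$ (the free sub-operad on the generators of positive arity) one sees that $\rmQ_0 \vee [\mu_0] \to \cAinf$ is an isomorphism, giving axiom \eqref{free weights}; the weight grading it induces counts the number of $\mu_0$'s. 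For axiom \eqref{strict mixed}, inspect the differential: in the degree-shifted relation $d\mu_n(\cdots) = -\sum_{a+b=n}\mu_{a+1}(\cdots,\mu_b(\cdots),\cdots)$, each quadratic term either has $b \ge 1$ (weight unchanged, contributing to $d_0$) or $b = 0$, i.e.\ an insertion of $\mu_0$ (weight raised by one, contributing to $d_1$). Hence $d = d_0 + d_1$ with no other components. Axiom \eqref{d1 closed} is immediate since $\mu_0$ has arity $0$ and the formula gives $d\mu_0 = 0$ (the sum $a+b=0$ is empty).

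Next, given any object $(\rmQ, d_\rmQ, \kurv, \rmQ_0)$ of $\Curv$, I must produce a unique morphism $c\calA_\infty \to \rmQ$. Since $\cAinf$ is quasifree on generators $\mu_0$ (the curvature) and $\mu_n$, $n \ge 1$, a morphism of graded operads is the same as a choice of target elements: send $\mu_0 \mapsto \kurv$ — forced, since a morphism of quadruples must send $\kurv \mapsto \kurv$ — and send $\mu_n \mapsto \nu_n$ for some degree $-1$, arity $n$, weight $0$ element $\nu_n \in \rmQ_0$, where weight $0$ is forced because the quadruple morphism must respect the sub-operad $\rmQ_0 = (\rmQ)_0$. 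The choice of the $\nu_n$ is then pinned down by compatibility with differentials: the relation $d_{\cAinf}\mu_n = -\sum \mu_{a+1}(\cdots,\mu_b(\cdots),\cdots)$ must map to $d_\rmQ \nu_n = -\sum \nu_{a+1}(\cdots,\nu_b(\cdots),\cdots)$ where by convention $\nu_0 := \kurv$. Splitting by weight, the weight-$0$ part reads $d_0 \nu_n = -\sum_{a+b=n,\, b\ge 1}\nu_{a+1}(\cdots,\nu_b(\cdots),\cdots)$ and the weight-$1$ part reads $d_1 \nu_n = -\sum_{a=n}\nu_{n}(\cdots,\kurv,\cdots)$. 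I would then argue by induction on arity $n$ (equivalently on the number of generators in a monomial, via quasifreeness) that these equations have a unique solution: at stage $n$, the weight-$0$ equation determines $d_0\nu_n$ in terms of lower $\nu_b$'s, and one checks this prescribed element is $d_0$-closed up to the weight-$1$ correction — here one invokes $d_\rmQ^2 = 0$ split into $d_0^2 = 0$, $d_0 d_1 + d_1 d_0 = 0$, $d_1^2 = 0$, together with axiom \eqref{d1 closed} ($d_1\kurv = 0$) to see the obstruction classes vanish, so that $\nu_n$ exists; and since $\rmQ_0$ is a (graded, not dg) sub-operad with no a priori differential constraint, the freeness of $\cAinf$ as a graded operad is what makes the lift unique once the generators are named. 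The point is that quasifreeness of $\cAinf$ turns "morphism out of $c\calA_\infty$" into "a coherent system of elements $\nu_n$ satisfying the $\Ainf$-relations", and axioms \eqref{strict mixed}–\eqref{d1 closed} guarantee exactly that such a system is uniquely and consistently extracted from $(d_0, d_1)$ acting on the free resolution.

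The main obstacle I anticipate is the bookkeeping in the induction: one must set up the quasifree description of $\cAinf$ carefully (it is cofibrant/quasifree, so $d_{\cAinf}$ is a coderivation-type differential determined by its values on generators, and a map out of it is determined by compatible values on generators modulo the relation) and then verify that the weight-split Leibniz/co-Leibniz identities close up, i.e.\ that the candidate $d_0\nu_n$ produced at each stage is consistent (the "$d^2 = 0$ obstruction vanishes" step). This is essentially the statement that $\cAinf$ is the minimal model / bar-cobar resolution forcing the $\Ainf$-relations, repackaged through the weight filtration; I would either cite the standard fact that $\cAinf$ is quasifree on these generators with this differential and that morphisms out of a quasifree operad correspond to solutions of the defining equations, or spell out the one-line coderivation argument. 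A secondary point to be careful about is signs and the degree shift (the $-1$ convention): I would fix the shifted-operad conventions at the outset so that the relation \eqref{a infinity relation} is literally the image of $d_{\cAinf}$ on generators, making the comparison with the weight-$0$/weight-$1$ pieces of $d_\rmQ$ term-by-term transparent. Granting those conventions, initiality follows formally, and the analogous argument with shuffles and the symmetric free operad proves the $\Curv^\Sigma$ statement for $c\calL_\infty$.
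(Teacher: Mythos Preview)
Your proposal has the right overall shape---verify $c\calA_\infty \in \Curv$, then build the unique map to an arbitrary $(\rmQ, d_\rmQ, \kurv, \rmQ_0)$ by induction using the weight decomposition $d=d_0+d_1$---but the inductive mechanism you describe is backwards and leaves a genuine gap.

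First a minor correction: $d_{\cAinf}\mu_0$ is not zero. In the $\cAinf$ relation one allows $a,b\ge 0$, so $a+b=0$ has the single solution $a=b=0$, giving $d\mu_0 = -\mu_1(\mu_0)$. This element has weight~$1$, hence equals $d_0\mu_0$, and $d_1\mu_0 = 0$; so your conclusion for axiom~\eqref{d1 closed} survives, but by the wrong reasoning.

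The real problem is your construction of the $\nu_n$. You say that at stage~$n$ ``the weight-$0$ equation determines $d_0\nu_n$ in terms of lower $\nu_b$'s'' and then invoke vanishing of obstructions ``so that $\nu_n$ exists''. But $\rmQ_0$ is an \emph{arbitrary} graded operad with no acyclicity hypothesis whatsoever: knowing what $d_0\nu_n$ \emph{should} equal gives you no element $\nu_n \in \rmQ_0(n)$ realizing it, let alone a unique one. Quasi-freeness of $\cAinf$ only tells you that a dg map is determined by compatible values on generators; it does not manufacture those values. (Incidentally, your weight-$1$ equation has an index error: it should read $d_1\nu_n = -[\nu_{n+1},\kurv]$, so the \emph{next} generator appears.)

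The paper reverses the roles of the two equations. One does not solve for $\nu_n$ from the $d_0$-relation; one \emph{defines} $\nu_{n+1}$ from the weight-$1$ relation $-d_1\nu_n = [\nu_{n+1},\kurv]$. The key ingredient is \cref{bracket acyclicity}: on $\rmQ = \rmQ_0 \vee [\kurv]$, the operator $p\mapsto[p,\kurv]$ is a differential whose homology is concentrated in arity~$0$, so in positive arity every $[-,\kurv]$-closed element of weight~$1$ is uniquely of the form $[\nu,\kurv]$ with $\nu\in\rmQ_0$. One first checks $[d_1\nu_n,\kurv]=0$ (from $d_1^2\nu_{n-1}=0$), extracts the unique $\nu_{n+1}$, and only then \emph{verifies} that $d_0\nu_n$ satisfies the $\dAinf$ formula, by applying $d_0d_1+d_1d_0=0$ to $\nu_{n-1}$ and once more using freeness in $\kurv$. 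The base of the induction likewise starts from $d_0\kurv$: by freeness in $\kurv$ it is uniquely $-\nu_1(\kurv)$, which is what produces $\nu_1$. This ``invertibility'' of insertion of $\kurv$ is precisely what the axioms of $\Curv$ buy you, and it is the missing idea in your sketch.
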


\begin{theorem}
\label{thm:morphism-sym}
    The initial object of $\Curv^\Sigma$ is
    $c\calL_\infty := (\cLinf, d_{\cLinf}, \ell_0, \langle \ell_i \rangle_{i > 0})$.
\end{theorem}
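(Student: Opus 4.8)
The plan is to adapt the argument for Theorem~\ref{thm:morphism} essentially verbatim, with the combinatorics of \eqref{l infinity relation} replacing that of \eqref{a infinity relation}. First one checks that $c\calL_\infty\in\Curv^\Sigma$. Condition~\eqref{free weights} holds because $\cLinf$ is, as a graded symmetric operad, free on the generators $\ell_n$ ($n\ge 0$), so its sub-operad $\langle\ell_i\rangle_{i>0}$ is free on the $\ell_n$ ($n\ge 1$) and $\langle\ell_i\rangle_{i>0}\vee[\ell_0]\to\cLinf$ is an isomorphism. Condition~\eqref{strict mixed} holds because a summand $\ell_{a+1}(\ell_b(\cdots),\cdots)$ of \eqref{l infinity relation} contains an $\ell_0$ if and only if $b=0$, in which case it contains exactly one; hence $d_{\cLinf}$ has only a weight-$0$ and a weight-$1$ component. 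Condition~\eqref{d1 closed} holds because $d_{\cLinf}\ell_0=-\ell_1(\ell_0)$ has weight $1$, equal to the weight of $\ell_0$, so it coincides with $d_0\ell_0$ and $d_1\ell_0=0$.

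Now fix $(\rmQ,d_\rmQ,\kurv,\rmQ_0)\in\Curv^\Sigma$. Since $\cLinf$ is quasi-free on the (graded) symmetric generators $\ell_n$, a morphism $c\calL_\infty\to\rmQ$ is the same datum as a sequence of degree $-1$ symmetric elements $\lambda_n\in\rmQ(n)$ with $\lambda_0=\kurv$ (forced, since $\ell_0$ must map to the distinguished element), $\lambda_n\in\rmQ_0(n)$ for $n\ge 1$ (forced, by weight-preservation), subject to the images of \eqref{l infinity relation}, namely $d_\rmQ\lambda_n=-\sum_{a+b=n}\sum_\sigma\lambda_{a+1}(\lambda_b(\cdots),\cdots)\circ\sigma^{-1}$ for all $n$. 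One must show such a sequence exists and is unique.

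The key structural input --- the symmetric counterpart of the corresponding step in the proof of Theorem~\ref{thm:morphism} --- is that for every object of $\Curv^\Sigma$ the $\kurv$-insertion map $\iota_n\colon\rmQ_0(n)\to\rmQ_1(n-1)$, $p\mapsto p\circ_1\kurv$, is an isomorphism of graded vector spaces (up to the shift by $|\kurv|$): by condition~\eqref{free weights} every weight-$1$ element of $\rmQ=\rmQ_0\vee[\kurv]$ is a combination of terms $p\circ_i\kurv=(p^\tau)\circ_1\kurv$ with $p\in\rmQ_0$, which gives surjectivity, and these terms are linearly independent because $\kurv$ is a free generator adjoined to $\rmQ_0$, which gives injectivity. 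Decomposing \eqref{l infinity relation} by weight, the only weight-$1$ summand (for $n\ge 1$) is the $b=0$ term $\ell_{n+1}(\ell_0,\cdots)$, so the weight-$1$ part of the $n$-th relation reads $d_1\lambda_n=-\lambda_{n+1}\circ_1\kurv$, while the $n=0$ relation reads $d_0\kurv=-\lambda_1\circ_1\kurv$ together with $d_1\kurv=0$ (condition~\eqref{d1 closed}). Applying $\iota_\bullet^{-1}$, these equations determine $\lambda_1=-\iota_1^{-1}(d_0\kurv)$ and then, recursively, $\lambda_{n+1}=-\iota_{n+1}^{-1}(d_1\lambda_n)$; this proves uniqueness.

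It remains to prove existence: that the $\lambda_n$ produced by this recursion are symmetric and satisfy the residual weight-$0$ equations $d_0\lambda_n=-\sum_{a+b=n,\,b\ge 1}\sum_\sigma\lambda_{a+1}(\lambda_b(\cdots),\cdots)\circ\sigma^{-1}$ for $n\ge 1$. I would establish both by a single induction on $n$: for the weight-$0$ equation, write the difference of its two sides as an element of $\rmQ_0(n)$ and show it is killed by the injective map $\iota_n$, using the identities $d_0^2=0$, $d_0d_1+d_1d_0=0$, $d_1^2=0$ (consequences of $d_\rmQ^2=0$ and condition~\eqref{strict mixed}), the recursive definition of the $\lambda_n$, the fact that $d_1$ is a derivation, and $d_1\kurv=0$; the symmetry of $\lambda_{n+1}$ is propagated along the same induction, being forced by the $n$-th relation. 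Up to the bookkeeping of shuffles and Koszul signs this is exactly the computation carried out in the proof of Theorem~\ref{thm:morphism}, and I expect it to be the longest, if least conceptual, ingredient --- the main obstacle.
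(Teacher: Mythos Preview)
Your approach is the paper's: produce $\lambda_{n+1}$ from $d_1\lambda_n$ via the isomorphism $\iota_{n+1}$, then verify the weight-$0$ relation by showing its defect lies in $\ker\iota_n=0$. The one place where your sketch does not go through as written is the symmetry of $\lambda_{n+1}$. From the weight-$1$ part of the $n$-th relation, $d_1\lambda_n=-\lambda_{n+1}\circ_1\kurv$, together with the inductive $S_n$-invariance of $\lambda_n$, you only obtain that $\lambda_{n+1}\circ_1\kurv$ is invariant under permutations of its $n$ open inputs, hence (by injectivity of $\iota_{n+1}$) that $\lambda_{n+1}$ is invariant under the copy of $S_n\subset S_{n+1}$ stabilising the first slot. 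This is strictly weaker than full $S_{n+1}$-invariance, and no other part of ``the $n$-th relation'' involves $\lambda_{n+1}$.

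The missing ingredient, which the paper supplies and which has no analogue in the non-symmetric proof of Theorem~\ref{thm:morphism}, is a second use of $d_1^2=0$: from $d_1\lambda_{n-1}=-\lambda_n\circ_1\kurv$ and the definition of $\lambda_{n+1}$ one computes $0=d_1^2\lambda_{n-1}=(\lambda_{n+1}\circ_1\kurv)\circ_1\kurv$, i.e.\ $\lambda_{n+1}(\kurv,\kurv,-)=0$. Since $\kurv$ has odd degree, the kernel of $p\mapsto p(\kurv,\kurv,-)$ on $\rmQ_0(n+1)$ is exactly the $(12)$-invariants, so $\lambda_{n+1}=\lambda_{n+1}^{(12)}$. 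Together with the $S_n$-invariance above this generates full $S_{n+1}$-invariance. So the symmetry step is not merely ``bookkeeping of shuffles and Koszul signs'' carried over from the non-symmetric argument; it is the genuinely new point in the symmetric case and needs this extra input.
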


\begin{remark}
    The hypotheses ensure that $d \kurv \in (\kurv)$.  The dg operad quotient $(\rmQ, d_\rmQ) / (\kurv)$ is evidently isomorphic to $(\rmQ_0, d_0)$.   
\end{remark}
    
\begin{remark}
    Condition \eqref{strict mixed} of Definition \ref{curv} has a name: it asserts that the weight grading on the underlying complex of $\calQ$ is `strict mixed'.   
    Let us recall this notion. On a complex~$(V, d)$ with additional `weight' grading $V = \bigoplus_{k \ge 0} V_k$, taking graded pieces gives $d = \sum d_i$ where $d_i(V_k) \subset V_{i+k}$ (note $d^2 = 0$ translates to $\sum_{i+j=n} d_i d_j = 0$). 
    The complex is said to be mixed when $d_i = 0$ for $i < 0$, and strict mixed when in addition $d_i = 0$ for $i > 1$.  
    Strict mixed complexes appear in the literature e.g.\ to encode properties of the de Rham complex of derived algebraic varieties  \cite{benzvi-nadler-loop, pantev2013shifted, calaque2017shifted}.  (Not strict) mixed complexes have appeared in the context of $\cLinf$ in \cite{CalaqueCamposNuiten21}, although we do not know the relation of this with the present work. 

    Note that what we call a mixed (respectively strict mixed) complex is also known in the literature as a multi (respectively mixed) complex.
\end{remark}

\begin{remark}
\label{rem:representation-MC-functor}
The operad $\Ainf$ (resp.\ $\Linf$) is known to co-represent the functor which sends an operad~$\rmP$ to the set~$\MC(\oplus_n \rmP(n))$ of Maurer--Cartan elements in its totalisation Lie algebra \cite[Thm.~4.1]{Robert-Nicoud--Wierstra20}.
It would be interesting to know if this universal property can be related to the one above.
\end{remark}

\begin{remark}
    Operads in~$\Curv$ encode curved algebras, but they are not themselves ``curved operads'' in the sense of~\cite{hirsh_curved_2012,BelliermillesDrummondcole20} or~\cite{RocaLucio24}.
    In this paper, we stay in the realm of filtered dg operads, where a natural curved Koszul duality theory can also be developed, as in \cite{CalaqueCamposNuiten21}.
\end{remark}

\subsection{Adjoints}

Let $\rmT$ be the dg operad freely generated by a degree $(-1)$ arity $0$ element~$\kurv$ and a degree $0$ arity $0$ element $\alpha$, and differential $d_{\rmT} \alpha = \kurv$. 
There is an obvious map:
\begin{eqnarray} \label{counit section ainf}
        \sigma: \cAinf  & \to & \cAinf \, \vee \, \rmT \\
            \mu_n & \mapsto & \mu_n .\nonumber
\end{eqnarray}
A less evident but very useful map is given by the following formula:
 \begin{eqnarray}
 \eta_{ \cAinf} : \cAinf  & \longrightarrow & \cAinf \, \hat{\vee} \, \rmT \label{Maurer-Cartan map} \\
 \mu_0 & \mapsto & \kurv + \sum_{k \geq 0} \mu_k(\alpha, \cdots, \alpha), \nonumber \\
 \mu_n & \mapsto & \sum_{i_1, \ldots, i_{n+1}} \mu_{n + \sum i_k}(\alpha^{i_1}, - , \alpha^{i_2}, - , \cdots, - , \alpha^{i_{n+1}}). \nonumber
 \end{eqnarray}
Above, $\hat{\vee}$ denotes the completed coproduct with respect to the filtration given by the number of~$\alpha$. 
 
In fact, the map $\eta$ is a formulation of the standard procedure of `twisting a $\cAinf$ structure'. Indeed, given a $\cAinf$ algebra $A$ and a degree 0 element $a \in A$, we obtain a $\cAinf \, \hat{\vee} \, \rmT$-algebra structure on~$A$ (sending $\alpha \to a$ and $\kurv \to da$).  Pulling back along $\eta$ gives a new $\cAinf$ structure on $A$; we denote it $A^a$.  
If $a$ satisfies the Maurer-Cartan equation $da + \sum \mu_k(a, \cdots, a) = 0$, then 
$\mu_0$ acts as zero on $A^a$, so $A^a$ is an $\Ainf$ algebra.  

There is a corresponding morphism in the symmetric operad setting for $\cLinf$:
 \begin{eqnarray}
 \tw_{\cLinf} : \cLinf   & \longrightarrow &  \cLinf \, \hat{\vee} \, \rmT \label{symmetric Maurer-Cartan map} \\
 \ell_0 & \mapsto & \kappa +\sum_{k \geqslant 0}\frac{1}{k!}\ell_{k}(\alpha^k), \nonumber \\
 \ell_n & \mapsto & \tw_{\cLinf}(\ell_n) \eqdef \sum_{k \geqslant 0}\frac{1}{k!}\ell_{k+n}(\alpha^k,-). \nonumber
 \end{eqnarray}

The operation of twisting by a solution of the Maurer-Cartan equation is endemic in deformation theory.  The use of it to remove the curvature term is fundamental in the study of Fukaya categories \cite{FOOO}.  Here we show it admits a natural interpretation in terms of the category $\Curv$:

\begin{theorem} 
\label{main theorem}
    By Theorem \ref{thm:morphism}, there is a 
     forgetful functor   
    $\calL: \Curv \to (\cAinf \downarrow \dgOp)$. 
    It admits a right adjoint $\calR$, given on objects
    by 
    $$\rmQ \mapsto (\rmQ \, \hat{\vee} \, \rmT, d_{\rmQ} + d_{\rmT}, \kurv + \sum_{k \geq 0} \mu_k(\alpha, \cdots, \alpha), \rmQ_0 \, \hat{\vee} \, [\alpha])$$
    and on morphisms by $(f: \rmQ \to \mathrm{P}) \mapsto f \vee 1_{\rmT}$;  the  co-unit $\epsilon: \calL \circ \calR \to 1$ is the map 
    $1_{\rmQ} \vee 0: \rmQ \, \hat{\vee} \, \rmT \to \rmQ$.
\end{theorem}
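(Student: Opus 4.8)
The plan is to check three things: that $\calR$ is a well-defined functor $(\cAinf\downarrow\dgOp)\to\Curv$; that $\epsilon$ is a natural transformation $\calL\circ\calR\Rightarrow 1$; and that $(\calR,\epsilon)$ has the universal property of a right adjoint, i.e.\ that for every $\sR\in\Curv$ and every $g\colon\calL(\sR)\to\rmP$ in $(\cAinf\downarrow\dgOp)$ there is a unique $\Curv$-morphism $f\colon\sR\to\calR(\rmP)$ with $\epsilon_{\rmP}\circ\calL(f)=g$. Below $\mu_k$ denotes the image of the $k$-ary generator of $\cAinf$ under whatever structure map is in play, and $\tilde\kurv_{\rmP}\eqdef\kurv+\sum_{k\ge0}\mu_k(\alpha,\dots,\alpha)$.

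\textbf{Well-definedness, and $\epsilon$.} Fix $\rmP$. As $\sum_k\mu_k(\alpha,\dots,\alpha)\in\rmP\,\hat{\vee}\,[\alpha]$, sending a fresh arity-zero generator to $\tilde\kurv_{\rmP}$ and $\rmP\,\hat{\vee}\,[\alpha]$ identically is an isomorphism of graded operads $(\rmP\,\hat{\vee}\,[\alpha])\vee[\tilde\kurv_{\rmP}]\xrightarrow{\ \sim\ }\rmP\,\hat{\vee}\,\rmT$ (inverse $\kurv\mapsto\tilde\kurv_{\rmP}-\sum_k\mu_k(\alpha,\dots,\alpha)$): this is \eqref{free weights}, and the resulting weight grading counts the occurrences of $\tilde\kurv_{\rmP}$. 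On the generators $\rmP\cup\{\alpha,\tilde\kurv_{\rmP}\}$ the derivation $d\eqdef d_{\rmP}+d_{\rmT}$ acts by $d|_{\rmP}=d_{\rmP}$, by $d\alpha=\tilde\kurv_{\rmP}-\sum_k\mu_k(\alpha,\dots,\alpha)$, and --- by a short Leibniz computation using the $\cAinf$-relations in $\rmP$, in which all weight-zero terms cancel --- by $d\tilde\kurv_{\rmP}=-\sum_{i,j\ge0}\mu_{1+i+j}(\alpha^{i},\tilde\kurv_{\rmP},\alpha^{j})$. On each of these $d$ raises weight by at most one, hence so on all of $\calR(\rmP)$, which is \eqref{strict mixed}; and $d_1\tilde\kurv_{\rmP}=0$ is \eqref{d1 closed}. (The last identity, together with its analogues on the higher $\mu_n$, is precisely the assertion that the twisting map $\eta_{\rmP}\colon\cAinf\to\rmP\,\hat{\vee}\,\rmT$ given by the formula \eqref{Maurer-Cartan map} is a morphism of dg operads --- the operadic twisting procedure.) So $\calR(\rmP)\in\Curv$; moreover $\eta_{\rmP}$ is a $\Curv$-morphism out of $c\calA_\infty$, so by Theorem~\ref{thm:morphism} it is the structure map of $\calL(\calR(\rmP))$, and the dg-operad quotient of $\calR(\rmP)$ by the curvature ideal is $(\rmP\,\hat{\vee}\,[\alpha],d_0)$ with $d_0\alpha=-\sum_k\mu_k(\alpha,\dots,\alpha)$. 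Functoriality, and that $f\vee 1_{\rmT}$ is a $\Curv$-morphism (it fixes $\tilde\kurv$ since $f$ is $\cAinf$-linear, and preserves weight zero), are immediate; and $\epsilon_{\rmP}=1_{\rmP}\vee 0$ is evidently a morphism of dg operads, lies over $\cAinf$ because $\epsilon_{\rmP}\circ\eta_{\rmP}$ kills every term of \eqref{Maurer-Cartan map} containing an $\alpha$, and is plainly natural.

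\textbf{The universal property.} Fix $\sR=(\rmQ,d_{\rmQ},\kurv_{\rmQ},\rmQ_0)$ and $g\colon\calL(\sR)\to\rmP$. Since $\rmQ=\rmQ_0\vee[\kurv_{\rmQ}]$ and a $\Curv$-morphism $f\colon\sR\to\calR(\rmP)$ must send $\kurv_{\rmQ}\mapsto\tilde\kurv_{\rmP}$ and $\rmQ_0$ into $\rmP\,\hat{\vee}\,[\alpha]$, giving $f$ is the same as giving a morphism of graded operads $f_0\colon\rmQ_0\to\rmP\,\hat{\vee}\,[\alpha]$ whose extension $f=(f_0;\,\kurv_{\rmQ}\mapsto\tilde\kurv_{\rmP})$ respects differentials; and $\epsilon_{\rmP}\circ\calL(f)=g$ then says exactly that $f_0\equiv g|_{\rmQ_0}$ modulo the ideal $(\alpha)$ (the requirement at $\kurv_{\rmQ}$ being automatic, since $g(\kurv_{\rmQ})=\mu_0=\epsilon_{\rmP}(\tilde\kurv_{\rmP})$). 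Because $f$ is necessarily weight-preserving (it carries the weight-$1$ generator $\kurv_{\rmQ}$ to the weight-$1$ element $\tilde\kurv_{\rmP}$ and $\rmQ_0$ into weight $0$), the equation $f\circ d_{\rmQ}=d\circ f$ splits into a weight-zero part --- which in particular makes $f_0$ a dg morphism $(\rmQ_0,d_0)\to\calR(\rmP)/(\tilde\kurv_{\rmP})$ --- and a weight-one part, relating $d_1$ of $\sR$ (which inserts $\kurv_{\rmQ}$) to the insertions of $\alpha$ produced by $f_0$. Filtering $\rmP\,\hat{\vee}\,[\alpha]$ by the number of $\alpha$'s, the condition $f_0\equiv g|_{\rmQ_0}\bmod(\alpha)$ fixes the $\alpha$-degree-zero part of $f_0$, and the two equations then determine the higher $\alpha$-components one at a time, each one uniquely; this yields the uniqueness of $f$, and --- once one checks the recursion is unobstructed, so that the maps produced genuinely assemble to a morphism of dg operads --- its existence.

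\textbf{The obstacle, and the symmetric case.} The real content is this unobstructedness. Conceptually it is the construction of the unit $\iota_{\sR}\colon\sR\to\calR(\calL(\sR))$, the operadic ``twisting morphism'' of $\sR$ --- on $\kurv_{\rmQ}$ it is $\kurv+\sum_k\mu_k(\alpha,\dots,\alpha)$, and for $\sR=c\calA_\infty$ it is exactly \eqref{Maurer-Cartan map} --- after which one may set $f=\calR(g)\circ\iota_{\sR}$: the triangle identities $\epsilon_{\calL\sR}\circ\calL(\iota_{\sR})=1$ and $\calR(\epsilon_{\rmP})\circ\iota_{\calR\rmP}=1$ give existence, and naturality of $\iota$ together with those identities gives the uniqueness again. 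This is precisely where hypotheses \eqref{strict mixed} and \eqref{d1 closed} enter, and it is a structured reformulation of the classical fact that a curved $A_\infty$-structure may be twisted by a formal Maurer--Cartan element. The analogous statement for $c\calL_\infty$ is proved identically, with \eqref{a infinity relation} and \eqref{Maurer-Cartan map} replaced by \eqref{l infinity relation} and \eqref{symmetric Maurer-Cartan map}.
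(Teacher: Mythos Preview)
Your overall architecture matches the paper's: verify that $\calR$ lands in $\Curv$, check that $\epsilon$ is a natural transformation under $\cAinf$, and then establish the universal property by filtering by the number of $\alpha$'s and solving for the components of $\Phi$ inductively. The computations in your first paragraph are correct and parallel the paper's Definition--Proposition preceding the proof.

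The gap is in your second and third paragraphs. You write that the weight-$0$ and weight-$1$ parts of $f\circ d_{\rmQ}=d\circ f$ ``determine the higher $\alpha$-components one at a time, each one uniquely,'' and that existence follows ``once one checks the recursion is unobstructed'' --- but you do neither. The paper's mechanism for both is a lemma you never state: on $\rmP\,\hat\vee\,\rmT$, the $\kurv$-weight-zero part of $\ker d_{\rmT}$ is exactly $\rmP\vee 0$ (this gives uniqueness, since the difference of two candidate $(k{+}1)$-st components lies in this kernel at positive $\alpha$-degree), and the $\kurv$-weight-one $d_{\rmT}$-homology vanishes (this gives existence, since the obstruction $\lambda^k(\nu)=\Phi^k d_{\rmQ}\nu - d_{\rmP}\Phi^k\nu$ is shown to be $d_{\rmT}$-closed of weight $\le 1$, hence $d_{\rmT}$-exact, and one sets $\Phi^{k+1}(\nu)$ to be the unique primitive). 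Without this acyclicity input there is no reason the recursion closes up or is uniquely solvable.

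Your proposed workaround --- build the unit $\iota_{\sR}$ directly and set $f=\calR(g)\circ\iota_{\sR}$ --- does not sidestep the difficulty: constructing $\iota_{\sR}$ for a general $\sR\in\Curv$ \emph{is} the universal-property problem with $\rmP=\calL(\sR)$ and $g=\id$. The ``classical fact that a curved $A_\infty$-structure may be twisted by a formal Maurer--Cartan element'' furnishes only $\iota_{c\calA_\infty}$, i.e.\ formula~\eqref{Maurer-Cartan map}; the paper explicitly remarks that no closed formula for $\iota_{\sR}$ is known in general, and its inductive construction of $\iota_{\sR}$ rests precisely on the $d_{\rmT}$-acyclicity lemma above. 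So your third paragraph names the crux correctly but does not supply the missing argument.
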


\begin{theorem} 
\label{main theorem symmetric}
    By Theorem \ref{thm:morphism-sym}, there is a 
     forgetful functor   
    $\calL^\Sigma: \Curv^\Sigma \to (\cLinf \downarrow \dgOp)$. 
    It admits a right adjoint $\calR^\Sigma$, given on objects
    by 
    $$\rmQ \mapsto (\rmQ \, \hat{\vee} \, \rmT, d_{\rmQ} + d_{\rmT}, \kurv + \sum_{k \geq 0} \frac{1}{k!} \ell_k(\alpha, \cdots, \alpha), \rmQ_0 \, \hat{\vee} \, [\alpha])$$
    and on morphisms by $(f: \rmQ \to \mathrm{P}) \mapsto f \vee 1_{\rmT}$;  the  co-unit $\epsilon: \calL^\Sigma \circ \calR^\Sigma \to 1$ is the map 
    $1_{\rmQ} \vee 0: \rmQ \, \hat{\vee} \, \rmT \to \rmQ$.
\end{theorem}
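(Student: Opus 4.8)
The plan is to verify that $(\calR^\Sigma,\epsilon)$ has the universal property characterizing $\calR^\Sigma$ as right adjoint to $\calL^\Sigma$: for every $\rmQ\in\Curv^\Sigma$, every $\rmP\in(\cLinf \downarrow \dgOp)$, and every morphism $g\colon\calL^\Sigma\rmQ\to\rmP$ in $(\cLinf \downarrow \dgOp)$, there is a unique $\Curv^\Sigma$-morphism $h\colon\rmQ\to\calR^\Sigma\rmP$ with $\epsilon_\rmP\circ\calL^\Sigma(h)=g$. The unit of the adjunction is then $\eta_\rmQ\colon\rmQ\to\calR^\Sigma\calL^\Sigma\rmQ$, namely the $h$ obtained from $g=\id_{\calL^\Sigma\rmQ}$, and it will turn out to equal the twisting morphism \eqref{symmetric Maurer-Cartan map} when $\rmQ=c\calL_\infty$. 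It is convenient to record first that $\calL^\Sigma$ is faithful -- a morphism of $\Curv^\Sigma$ is determined by its underlying morphism of dg operads -- so one may argue throughout with underlying dg operad morphisms.

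The first task is to check that $\calR^\Sigma$ lands in $\Curv^\Sigma$ and is functorial. Condition \eqref{free weights} of Definition~\ref{curv} for $\calR^\Sigma(\rmP)$ holds because $\rmT\cong[\kurv]\vee[\alpha]$ as graded (non-dg) operads, so $\rmP \, \hat{\vee} \, \rmT\cong(\rmP \, \hat{\vee} \, [\alpha]) \, \hat{\vee} \, [\kurv]$, and the change of variables $\kurv\mapsto\kurv+\sum_{k\geq 0}\tfrac{1}{k!}\ell_k(\alpha^k)$ is a graded-operad automorphism of this that fixes $\rmP \, \hat{\vee} \, [\alpha]$; the resulting weight grading counts the new curvature. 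Conditions \eqref{strict mixed} and \eqref{d1 closed} will follow from the identity
\[
  (d_\rmP+d_\rmT)\Bigl(\kurv+\sum_{k\geq 0}\tfrac{1}{k!}\ell_k(\alpha^k)\Bigr)
  \;=\;-\sum_{a\geq 0}\tfrac{1}{a!}\,\ell_{a+1}\Bigl(\kurv+\sum_{k\geq 0}\tfrac{1}{k!}\ell_k(\alpha^k),\ \alpha^a\Bigr),
\]
a direct consequence of the relations \eqref{l infinity relation} in $\rmP$ and of $d_\rmT\alpha=\kurv$: the right-hand side is linear in the curvature, hence of weight one, so the weight-one part of $d(\text{curvature})$ vanishes and $d$ raises weight by $0$ or $1$. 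Functoriality is immediate, since a morphism $f$ under $\cLinf$ has $f(\ell_k)=\ell_k$ and so $f \vee 1_\rmT$ carries curvature to curvature and weight-zero part to weight-zero part. One then checks that $\epsilon_\rmP=1_\rmP\vee 0$ is a morphism in $(\cLinf \downarrow \dgOp)$, naturally in $\rmP$: by Theorem~\ref{thm:morphism-sym} the object $\calL^\Sigma\calR^\Sigma\rmP$ is the underlying map of the unique $\Curv^\Sigma$-morphism $c\calL_\infty\to\calR^\Sigma\rmP$, which one identifies with the pushforward along $\cLinf\to\rmP$ of \eqref{symmetric Maurer-Cartan map} (a dg operad morphism sending $\ell_0$ to the curvature of $\calR^\Sigma\rmP$ and $\langle\ell_i\rangle_{i>0}$ into $\rmP \, \hat{\vee} \, [\alpha]$); post-composing with $\epsilon_\rmP$ kills $\alpha$ and $\kurv$ and returns the structure map of $\rmP$. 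Naturality then holds because $\epsilon_\rmP$ and $\epsilon_{\rmP'}\circ(f \vee 1_\rmT)$ agree on $\rmP$, $\alpha$ and $\kurv$.

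The heart of the argument is the universal property. For uniqueness one filters $\rmP \, \hat{\vee} \, \rmT$ by the number of $\alpha$'s: any such $h$ is forced to send $\kurv_\rmQ$ to the curvature of $\calR^\Sigma\rmP$ (since $\kurv_\rmQ=\ell_0$ in $\rmQ$ and by Theorem~\ref{thm:morphism-sym}) and to reduce to $g$ modulo the ideal generated by $\alpha$ and $\kurv$, after which the chain-map condition propagates these data to all higher $\alpha$-components of $h$ by induction on the filtration. For existence one extends $g$ order by order along the same filtration, the obstruction at each stage vanishing by virtue of the relations \eqref{l infinity relation} holding in $\rmQ$ (an operad under $\cLinf$ by Theorem~\ref{thm:morphism-sym}). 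Equivalently -- and this is the conceptual content -- one takes $h=\calR^\Sigma(g)\circ\eta_\rmQ$, where $\eta_\rmQ\colon\rmQ\to\rmQ \, \hat{\vee} \, \rmT$ is the operadic twisting of $\rmQ$ by the formal Maurer--Cartan element $\alpha$ (with formal differential $\kurv$): it sends $\kurv_\rmQ$ to $\kurv+\sum_{k\geq 0}\tfrac{1}{k!}\ell^\rmQ_k(\alpha^k)$, restricts on $\rmQ_0$ to the twist by $\alpha$ assembled from the operations $\ell^\rmQ_i$ ($i>0$, which lie in $\rmQ_0$), hence carries $\rmQ_0$ into the weight-zero part $\rmQ \, \hat{\vee} \, [\alpha]$ of $\calR^\Sigma\calL^\Sigma\rmQ$, and specializes to \eqref{symmetric Maurer-Cartan map} at $\rmQ=c\calL_\infty$. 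That $\eta_\rmQ$ is a morphism of dg operads, and of $\Curv^\Sigma$, is again the computation of the previous paragraph, now performed inside $\rmQ$ instead of only on the curvature element.

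Granting this, $\calR^\Sigma$ is right adjoint to $\calL^\Sigma$ with counit $\epsilon$ and unit the twisting morphism $\eta$; naturality of $\eta$ is formal (or follows from the fact that $\Curv^\Sigma$-morphisms commute with the maps from $\cLinf$, Theorem~\ref{thm:morphism-sym}). The main obstacle is the existence half of the last step -- constructing $\eta_\rmQ$ for a general $\rmQ\in\Curv^\Sigma$ and verifying that it commutes with the differentials -- which is exactly where the defining relations \eqref{l infinity relation} enter, and which requires some care with the completion $\hat{\vee}$ and with the change of variables between $\kurv$ and the twisted curvature. The whole argument runs in parallel with the proof of Theorem~\ref{main theorem}, symmetrized (shuffle) insertions of $\alpha$ replacing the planar ones appearing in \eqref{Maurer-Cartan map}.
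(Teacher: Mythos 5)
Your overall skeleton (verify the universal property directly; uniqueness and existence by induction on the $\alpha$-filtration of $\rmQ \, \hat{\vee} \, \rmT$; counit $1_\rmQ \vee 0$; unit specializing to \eqref{symmetric Maurer-Cartan map}) is the paper's, but the existence step — which you yourself flag as ``the main obstacle'' — is not actually carried out, and the route you propose for it does not work. Your ``conceptual'' construction $h=\calR^\Sigma(g)\circ\eta_\rmQ$ presupposes that $\eta_\rmQ$ can be written down explicitly on $\rmQ_0$ as ``the twist by $\alpha$ assembled from the operations $\ell^\rmQ_i$''; but for a general $\rmQ\in\Curv^\Sigma$ the suboperad $\rmQ_0$ is not generated by the images $\ell^\rmQ_i$ of the $\cLinf$ generators, so this prescription defines $\eta_\rmQ$ on almost nothing, and no closed formula is available (the paper explicitly remarks that explicit formulas generalizing \eqref{Maurer-Cartan map} and \eqref{symmetric Maurer-Cartan map} are not known; $\eta$ is only obtained as an output of the inductive adjunction argument). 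Your fallback, extending $g$ order by order, is the correct strategy, but the reason the obstructions vanish is misidentified: it is not ``the relations \eqref{l infinity relation} holding in $\rmQ$'' but the acyclicity of $d_\rmT$ on $\rmP \, \hat{\vee} \, \rmT$ in $\kurv$-weights $0$ and $1$, applied to the $d_\rmT$-closed element $\lambda^k(\nu)=(\Phi^k\circ d_\rmQ-d_\rmP\circ\Phi^k)(\nu)$, which has positive $\alpha$-filtration and $\kurv$-weight at most one. The $\cLinf$ relations enter elsewhere, namely in propagating the identity $\Phi^j(\ell_n)=\ell_n^{\alpha,j}$ through the induction — a condition your sketch omits, although it is needed for the dg compatibility at the distinguished curvature element and for identifying the unit with \eqref{symmetric Maurer-Cartan map}.

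More importantly, the specifically symmetric content of this theorem is absent. The paper's proof consists precisely of (i) the symmetric analogue of the $d_\rmT$-homology lemma (\cref{dT homology symmetric}) and (ii) adding $\Sym_n$-equivariance of $\Phi$; everything else is verbatim the nonsymmetric argument. Lemma \ref{dT homology symmetric} is not formal: it uses characteristic zero, the symmetrization $\nu=\frac{1}{k!}\sum_{\sigma}\mu^\sigma$, and yields the explicit preimage $\frac{(-1)^{|\mu|}}{k+1}\mu(\alpha^{k+1},-)$ of a closed $\kurv$-weight-one element — exactly what makes the inductive construction of $\Phi^{k+1}$ and the $1/k!$ coefficients in the unit work. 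Your proposal never engages with this lemma or with the equivariance of the inductively constructed $\Phi^{k+1}$ (which follows from uniqueness of the $d_\rmT$-preimage, but must be said). A minor fixable slip: for condition \eqref{d1 closed} of \cref{curv} applied to $\calR^\Sigma(\rmP)$, what must vanish is the component of $d(\kurv+\sum_k\frac{1}{k!}\ell_k(\alpha^k))$ that raises the $\kurv'$-weight by one, not its ``weight-one part''; the paper's one-line argument $d_1(\kurv+\ell_0^\alpha)=d_1^2\alpha=0$ is the clean way to see it.
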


\begin{corollary}
\label{corollary:explicit-unit}
    For $c\calA_\infty$, the unit of the adjunction forgets to the formula \eqref{Maurer-Cartan map}, and correspondingly the unit for $c\calL_\infty$ forgets to \eqref{symmetric Maurer-Cartan map}. 
\end{corollary}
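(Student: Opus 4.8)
First I would identify the unit abstractly. The unit is a natural transformation $1 \to \calR \circ \calL$; its component at $c\calA_\infty$ is therefore a morphism $c\calA_\infty \to \calR\calL(c\calA_\infty)$ in $\Curv$. Now $\calL$ sends an object of $\Curv$ to its underlying operad together with the canonical morphism from $\cAinf$ supplied by Theorem~\ref{thm:morphism}; since the canonical endomorphism of $c\calA_\infty$ is the identity, $\calL(c\calA_\infty)$ is $\cAinf$ with its identity structure morphism, and Theorem~\ref{main theorem} then gives $\calR\calL(c\calA_\infty) = (\cAinf \, \hat{\vee} \, \rmT, d_{\cAinf} + d_{\rmT}, \kurv + \sum_{k \geq 0} \mu_k(\alpha, \cdots, \alpha), \cAinf \, \hat{\vee} \, [\alpha])$, whose underlying operad $\cAinf \, \hat{\vee} \, \rmT$ is exactly the target of the map \eqref{Maurer-Cartan map}. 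Since $c\calA_\infty$ is initial in $\Curv$ by Theorem~\ref{thm:morphism}, there is exactly one morphism out of it into any object; hence the unit is \emph{the} $\Curv$-morphism $c\calA_\infty \to \calR\calL(c\calA_\infty)$, and it suffices to check that the dg operad map \eqref{Maurer-Cartan map} underlies such a morphism.

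Then I would verify the three conditions defining a morphism of quadruples for \eqref{Maurer-Cartan map}. Two are immediate from the explicit formula: it sends $\mu_0$ to $\kurv + \sum_{k \geq 0} \mu_k(\alpha, \cdots, \alpha)$, which is the distinguished element of $\calR\calL(c\calA_\infty)$; and for $n>0$ it sends $\mu_n$ to a sum of operations built only from the $\mu_i$ and $\alpha$, with no occurrence of $\kurv$, so it carries the weight-zero sub-operad $\langle \mu_i \rangle_{i > 0}$ of $c\calA_\infty$ into the weight-zero sub-operad $\cAinf \, \hat{\vee} \, [\alpha]$. The remaining condition, that \eqref{Maurer-Cartan map} intertwine the differentials $d_{\cAinf}$ and $d_{\cAinf} + d_{\rmT}$, is the only point requiring genuine computation: it is precisely the classical statement that the Maurer--Cartan twist of a $\cAinf$-algebra by a degree-zero element $a$, with $\kurv$ recording $da$, is again a $\cAinf$-algebra, and I would check it directly on the generators $\mu_n$ using the relations \eqref{a infinity relation}. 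Granting this, \eqref{Maurer-Cartan map} is the unique $\Curv$-morphism $c\calA_\infty \to \calR\calL(c\calA_\infty)$, hence is the unit. (Equivalently, one obtains \eqref{Maurer-Cartan map} by specializing to $\rmQ = \calR\calL(c\calA_\infty)$ the explicit construction of the canonical morphism $c\calA_\infty \to \rmQ$ given in the proof of Theorem~\ref{thm:morphism}.)

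The symmetric statement is proved verbatim, with Theorems~\ref{thm:morphism-sym} and~\ref{main theorem symmetric} in place of Theorems~\ref{thm:morphism} and~\ref{main theorem} and with \eqref{symmetric Maurer-Cartan map} in place of \eqref{Maurer-Cartan map}: that formula sends $\ell_0$ to the distinguished element $\kurv + \sum_{k \geq 0} \frac{1}{k!} \ell_k(\alpha, \cdots, \alpha)$ of $\calR^\Sigma\calL^\Sigma(c\calL_\infty)$, carries each $\ell_n$ with $n>0$ into $\cLinf \, \hat{\vee} \, [\alpha]$, and is a chain map by the analogous twisting computation using \eqref{l infinity relation}. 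The hard part, such as it is, will be this chain-map verification --- a finite, sign-sensitive computation in each arity; everything else is forced by the initiality of $c\calA_\infty$ (resp.\ $c\calL_\infty$) and the description of the right adjoint already established, and if the chain-map identity is taken as already known (it is implicit in the proof of the earlier theorems) the corollary follows immediately.
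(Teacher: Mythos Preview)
Your proposal is correct and closely parallels the paper's proof, with one small but pleasant difference in how you pin down the unit. The paper characterizes the unit $\eta$ by the triangle identity---it is the unique $\Curv$-morphism $c\calA_\infty \to \calR(\cAinf)$ whose composite with the counit $\epsilon$ is the identity---and so must check both that \eqref{Maurer-Cartan map} is a $\Curv$-morphism and that $\epsilon \circ \eta_{\cAinf} = \id$. You instead invoke the initiality of $c\calA_\infty$ in $\Curv$ (Theorem~\ref{thm:morphism}) to conclude that there is only one $\Curv$-morphism out of it, so the triangle-identity check becomes redundant. Both routes reduce the substance of the corollary to the same two verifications: that \eqref{Maurer-Cartan map} respects the quadruple structure (immediate from the formula), and that it intertwines the differentials (the chain-map computation, which the paper carries out in an appendix). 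Your shortcut via initiality is marginally more economical; the paper's route has the advantage of being the argument one would use for a general object of $\Curv$, not just the initial one.
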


\begin{proof}
    The unit morphism $\eta: c \mathcal{A}_\infty  \to \calR(\cAinf)$ is uniquely characterized by the fact that it is a map in $\Curv$ such that the composition of $\calL \eta$ with the co-unit $\epsilon: \cAinf \, \hat{\vee} \, \rmT \to \cAinf$ is the identity. 
    We must check that the maps \eqref{Maurer-Cartan map} and \eqref{symmetric Maurer-Cartan map} have these properties.  
    The computations in~\cref{sec:appendix} show that in both cases~$\eta$ is a map of dg operads.
    From its explicit form it is then easy to see that it is further a morphism in $\Curv$, and that the composition with $\epsilon$ is the identity. 
\end{proof}

\begin{remark}
    The proof of Theorem \ref{main theorem} is an inductive procedure computing the adjunction, which could have been used to determine the morphism induced by the unit of the adjunction, had it not been known in advance.
\end{remark}

\begin{remark}
Algebras for the operad underlying any element of $\Curv$ admit a similar `twisting by Maurer-Cartan element'.  Indeed, 
the counit $\epsilon$  admits a section $\sigma: \rmQ \to \rmQ \, \hat{\vee} \, \rmT$, sending each element in $\rmQ$ `to itself'. 
When $\rmQ = \cAinf$, this is \eqref{counit section ainf}. 
More generally, suppose given a $\rmQ$-algebra $A$, i.e.\ a morphism $\rmQ \to \End_A$.  
A lift through $\sigma$ to a $\rmQ \, \hat{\vee} \, \rmT$-algebra structure on $A$ is the same as the datum of an element  $\alpha \in A$.
Given such a structure, we may, by pullback along $\mathcal{L} \eta$, obtain a new $\rmQ$-algebra structure on $A$.  When $\alpha \in A$ satisfies the Maurer-Cartan equation $d\alpha + \sum \mu_k (\alpha, \ldots, \alpha) = 0$, then the new $\rmQ$-algebra structure descends to a $\rmQ/\mu_0$-algebra structure.  However, while the proof of Theorems \ref{main theorem} and \ref{main theorem symmetric} provide inductive constructions of $\eta$, we do not presently know explicit formulas generalizing~\eqref{Maurer-Cartan map} and~\eqref{symmetric Maurer-Cartan map}. 
\end{remark}

\begin{remark}
    In fact, $\cAinf$ is quasi-isomorphic to the trivial operad \cite[Thm.~5.7]{ChuangLazarev13}, see also \cite[Rem.~2.53]{CalaqueCamposNuiten21}.
    A standard solution to this is to add the data of a decreasing filtration for which $\mu_0, \mu_1$ are in positive level. 
    Our theorems have variants in this context: replace \cref{curv} with the category $\Curv^{filt}$ whose objects are  $(\rmQ, d_\rmQ, \kurv, \rmQ_0) \in \Curv$ together with a decreasing filtration on $\rmQ$ such that
    \begin{enumerate}
        \item[(4)] $\kurv$ is in positive filtration level, and $d_0(\kurv)$ is in strictly higher filtration level than $\kurv$.
    \end{enumerate}
    Morphisms are morphisms in $\Curv$ which respect the filtrations.

    The analogue of \cref{thm:morphism} is that the initial object of $\Curv^{filt}$ is $(\cAinf^{filt}, d_{\cAinf}, \mu_0, \langle \mu_i \rangle_{i > 0})$, where $\cAinf^{filt}$ is equipped with the filtration such that $\mu_0, \mu_1$ are in filtration level $1$ and $\mu_n$ is in filtration level $0$ for $n \geq 2$. 

    The analogue of \cref{main theorem} is that the forgetful functor   
    $$\Curv^{filt} \to (\cAinf^{filt} \downarrow \dgOp^{filt})$$ admits a right adjoint given on objects
    by 
    \[\rmQ \mapsto (\rmQ \, \hat{\vee} \, \rmT, d_{\rmQ} + d_{\rmT}, \kurv + \sum_{k \geq 0} \mu_k(\alpha, \cdots, \alpha), \rmQ_0 \, \hat{\vee} \, [\alpha])\]
    where $\alpha$ and $\kurv$ are in filtration level $1$.
    The proofs are identical.
\end{remark}

\begin{remark}
    In subsequent work \cite{laplante-anfossi_operadic_2025}, we show that the adjunctions in \cref{thm:morphism,thm:morphism-sym} admit Koszul pre-dual adjunctions giving rise to universal characterizations of the unital associative (resp.\ commutative) cooperad as initial object in a certain category. 
    It would be interesting to know if these could be related to the characterization of the associative (resp.\ commutative) operad as the unit of Manin's white product in the category of non-symmetric (resp.\ symmetric) binary quadratic operads \cite[Sec.~2.2]{ginzburg_koszul_1994} (see also \cite[Sec.~4.3]{Vallette08}).
    Further, Robert--Nicoud shows in \cite[Sec.~4]{Robert-Nicoud18} that the universal characterization of $\Linf$ as co-representing the Maurer--Cartan functor (\cref{rem:representation-MC-functor}) can be related to the universal property of the Lie operad being the unit for Manin's black product in the same category.
\end{remark}


\subsection*{Acknowledgements}
We would like to thank Alexander Berglund, Coline Emprin, Hugo Pourcelot, Alex Takeda, and Bruno Vallette for useful discussions.
We would also like to thank the referee for their insightful and useful remarks, and in particular for asking about a symmetric version of \cref{bracket acyclicity}.
The work of the authors is supported by Novo Nordisk Foundation grant NNF20OC0066298, Villum Fonden Villum Investigator grant 37814, and Danish National Research Foundation grant DNRF157. GLA was supported by the Andrew Sisson Fund and the Australian Research Council Future Fellowship FT210100256.


\section{Preliminaries}
\label{section: preliminaries}

\subsection{Conventions}
From here on and until \cref{section: symmetric}, the word ``operad'' will mean non-unital non-symmetric operad in graded modules over $\Z$, and the expression ``dg operad'' will mean non-unital non-symmetric operad in differential graded modules over $\Z$.
In the latter case, we will use homological degree conventions. 
We refer to \cite{LodayVallette12} for more details on algebraic operads.

\subsection{Basic definitions} 

Let $(A,d_A)$ be a chain complex. 
The \emph{endomorphism dg operad} of $(A,d_A)$ is the operad given in arity $n$ by the dg $\Z$-module
$$\End_{(A,d_A)}(n)\eqdef (\hom(A^{\otimes n},A),\partial),$$
where the differential is given by $\partial(f) \eqdef d_A f-(-1)^{|f|}f d_{A^{\otimes n}}$.
Operadic composition is given by composition of functions.

\begin{definition}
    The shifted \defn{A-infinity dg operad}, denoted $\Ainf$, is the free operad on generators~$\mu_n$, $n\geq 2$, of arity $n$ and degree $(-1)$, endowed with the differential
    \begin{equation} \label{ainf differential}
    d_{\Ainf}(\mu_n) \eqdef - \sum_{\substack{p, r \geq 0, q \geq 2 \\ p+q+r=n}} \mu_{p+1+r} \circ_{p+1} \mu_q \ .
    \end{equation}
\end{definition}

\begin{definition}
    The shifted \defn{augmented A-infinity dg operad}, denoted $\dAinf$, is the free operad on generators~$\mu_n$, $n\geq 1$, of arity $n$ and degree $(-1)$, endowed with the differential
    \begin{equation} \label{dainf differential}
        d_{\dAinf}(\mu_n) \eqdef - \sum_{\substack{ p,r\geq 0, q\geq 1 \\ p+q+r=n}} \mu_{p+1+r} \circ_{p+1} \mu_q \ .
    \end{equation}
\end{definition}

\begin{remark}
    We use the notation $\dAinf$ to emphasize the fact that the differential is part of the operations.
    Note that any $\dAinf$-algebra $f:\dAinf \to \End_{(A,0)}$ gives rise to an ordinary $\Ainf$-algebra: denoting by $d \eqdef f(\mu_1)$ the image of $\mu_1$ and twisting by the operadic Maurer-Cartan element~$\mu_1$ gives a morphism of operads
    $\Ainf \to (\dAinf)^{\mu_1} \to (\End_{(A,0)})^{d} = \End_{(A,d)}$,
    see \cite[Ex.~5.6]{DotsenkoShadrinVallette23}.
\end{remark}

\begin{samepage}
\begin{definition}
    The shifted \defn{curved A-infinity dg operad}, denoted $\cAinf$, is the free operad on generators $\mu_n$, $n\geq 0$, of arity $n$ and degree $(-1)$, endowed with the differential
    \begin{equation} \label{cainf equation}
        d_{\cAinf}(\mu_n) \eqdef - \sum_{\substack{p,q,r \geq 0 \\ p+q+r=n}} \mu_{p+1+r} \circ_{p+1} \mu_q.
    \end{equation}
\end{definition}
\end{samepage}

Henceforth, we omit the adjective `shifted'.  
    One can recover the usual degree and signs conventions for (curved) $\Ainf$-algebras by de-suspension, see e.g. \cite[Sec.~4.1]{DotsenkoShadrinVallette23} for formulas.

Finally, let $\rmP$ be an operad. 
We will make use of the \defn{pre-Lie product} defined on $\mu \in \mathrm{P}(m)$ and $\nu \in \mathrm{P}(n)$ by
\begin{equation}
\label{pre-Lie-prod}
    \mu \star \nu \eqdef \sum_{i=1}^{m}\mu \circ_i \nu.
\end{equation}
It is usually defined on the totalization $\Tot(\rmP) \eqdef \oplus_n \rmP(n)$ of $\rmP$, and it satisfies the pre-Lie relation
\begin{equation}
    \label{eq:pre-Lie}
    (\mu \star \nu) \star \lambda - \mu \star (\nu \star \lambda) =
    (\mu \star \lambda) \star \nu - \mu \star (\lambda \star \nu).
\end{equation}
One can for instance rewrite the $\cAinf$ relations~\eqref{cainf equation} as
\[ d_{\cAinf}(\mu_n) = - 
\sum_{\substack{p,q \geqslant 0 \\ p+q=n }}
 \mu_{p+1} \star \mu_q .\]


\begin{lemma} 
\label{bracket acyclicity} 
    Let $\kurv$ be an arity zero operation of odd degree, and $\mathrm{P} = \mathrm{P}_0 \vee [\kurv]$. 
    Then $p \mapsto p \star \kurv$ is a differential on $\Tot(\rmP)$, 
    whose only homology consists of arity zero elements. 

In particular, if $p \in \rmP(n)$ is of arity $n \geq 1$, of $\kappa$-weight $1$, and satisfies $p \star \kappa=0$, then there exists a unique $p_0 \in \rmP_0(n+1)$ such that $p = p_0 \star \kappa$.
\end{lemma}

\begin{proof}
  The fact that $p \mapsto p \star \kurv$ is a differential on the totalization of $\rmP$ follows from the graded pre-Lie relation~\eqref{eq:pre-Lie}.

    We now prove the statement about the homology of this differential.
    Let $p \in \mathrm{P}$ be an element of arity $n \geq 1$ such that $p \star \kurv = 0$.
    If $p$ is of weight $0$ (i.e. $p \in \mathrm{P}_0$), then $p=0$ since $\mathrm{P} = \mathrm{P}_0 \vee [\kurv]$.
    Assume that $p$ is of weight $k \geq 1$, and write $p = \sum_{i=1}^n p_i \circ_i \kurv$ with $p_i \in \mathrm{P}(n+1)$ of weight $(k-1) \geq 0$.
    The equality $0=p \star \kurv$ writes as 
    \[0=\sum_{1 \leq i,j \leq n} (p_i \circ_i \kurv) \circ_j \kurv. \]
    The latter is a sum of arity $(n+1)$, weight $(k-1)$, elements (the $p_i$'s) decorated by two $\kurv$'s.
    Since $\mathrm{P} = \mathrm{P}_0 \vee [\kurv]$, the sum of the terms decorated by two $\kurv$'s in position $(i,j)$ vanishes for every $1 \leq i < j \leq n+1$, i.e.\ we have
    \[
    (p_i \circ_i \kurv) \circ_{j-1} \kurv + (p_j \circ_j \kurv) \circ_i \kurv = 0.  
    \]
    Given $2 \leq j \leq n+1$, the sum of the terms decorated by two $\kurv$'s in position $(1,j)$ is
    \[(p_1 \circ_1 \kurv) \circ_{j-1} \kurv + (p_j \circ_j \kurv) \circ_1 \kurv = (p_1 \circ_1 \kurv) \circ_{j-1} \kurv - (p_j \circ_1 \kurv) \circ_{j-1} \kurv = ((p_1-p_j) \circ_1 \kurv) \circ_{j-1} \kurv. \]
    Since the latter vanishes, we conclude that $p_j=p_1$ for every $j$.
    Therefore we have $p = p_1 \star \kurv$.
    This proves the statement about the homology of $- \star \kappa$.

    We now assume $p \in \rmP(n)$ is of $\kappa$-weight $k=1$. We already know that there exists $p_0 \in \rmP_0$ such that $p = p_0 \star \kappa$, so it remains to prove uniqueness. Let $p_0' \in \rmP_0(n+1)$ such that $[p_0', \kappa] = p_0 \star \kappa = p$. Then $q_0 := p_0'-p_0$ is of positive arity and satisfies $q_0 \star \kappa =0$.
    By what we proved above, this implies that $q_0$ is in the image of $- \star \kappa$. 
    Since moreover $q_0 \in \rmP_0$, we get $q_0=0$. 
    This concludes the proof.
\end{proof}

\section{Proof of \cref{thm:morphism}}

Consider some operad $(\rmQ, d_\rmQ, \kurv, \rmQ_0)$ in~$\Curv$. 
We must show there is a unique dg operad map 
$(\cAinf, d_{\cAinf}) \to (\rmQ, d_\rmQ)$ such that $\mu_0 \mapsto \kurv$ and $\mu_{> 0} \mapsto \rmQ_0$.
This amounts to showing that, writing $\nu_0\eqdef \kurv$, that there are unique degree $(-1)$ elements $\nu_i \in \rmQ_0(i)$ for $i=1, 2, \ldots$, such that the $\nu_i$ satisfy the $\cAinf$ relation \eqref{cainf equation}; then the assignment $\mu_i \mapsto \nu_i$ determines the desired morphism in~$\Curv$.   

Note that $d \nu_0 = d_0 \nu_0$ is a degree $(-2)$ element of $\rmQ_1(0)$.  By freeness in $\nu_0$, said element can be uniquely written as $-\nu_1(\nu_0)$ for some degree $(-1)$ element  $\nu_1 \in \rmQ_0(1)$.
 
For the $\nu_k$, $k \geq 2$, we will proceed by induction.
The inductive hypothesis is that there are unique degree~$(-1)$ elements $\nu_{2}, \ldots, \nu_k$
with $\nu_i \in \rmQ_0(i)$, such that
\begin{itemize}
    \item for $1 \le j \le k-1$, $d_0 \nu_j$ is given by the same formula \eqref{dainf differential} as the~$\dAinf$ differential, 
    \item for $1 \le j \le k-1$,  we have $d_1 \nu_j = - \nu_{j+1} \star \nu_0$.
\end{itemize} 
The validity of the induction hypothesis for all $k$ would show the existence of unique $\nu_j$ satisfying the $\cAinf$ relations. 

We will  use the separation by weights $d = d_0 + d_1$; 
so the relation $d^2 = 0$ expands to $d_0^2 = 0$, $d_1 d_0 + d_0 d_1 = 0$, and $d_1^2 = 0$.   Recall also that, by definition of $\Curv$, $d_1 \nu_0 = 0$. 

Let us check the base case $k=2$.  
We have: 
$$0 = d_0^2 \nu_0 = - d_0 (\nu_1(\nu_0)) = -(d_0 \nu_1)(\nu_0) + \nu_1(d \nu_0)  = -(d_0 \nu_1)(\nu_0) - \nu_1(\nu_1( \nu_0)) = -(d_0 \nu_1 + \nu_1 \circ \nu_1)(\nu_0).$$
Using freeness in $\nu_0$, we see that $d_0 \nu_1 + \nu_1 \circ \nu_1 = 0$ as desired.
Additionally:
$$0=d_0 d_1 \nu_0 + d_1 d_0 \nu_0 = -  d_1 \nu_1 (\nu_0)$$ 
so, by Lemma \ref{bracket acyclicity}, there exists a unique element $\nu_2$ such that $- d_1 \nu_1 = \nu_2 \star \nu_0$.  
    
We now take the inductive step: assume the hypothesis holds for some $k$, we will establish it for $k+1$. 
We have:
$0 = d_1^2 \nu_{k-1} = - d_1 (\nu_{k} \star \nu_0)  = - (d_1 \nu_k) \star \nu_0$.
By Lemma \ref{bracket acyclicity}, there exists a unique element $\nu_{k+1}$ such that $- d_1 \nu_k = \nu_{k+1} \star \nu_0$.  
    
It remains to show that $d_0 \nu_k$ is given by the $\dAinf$ formula \eqref{dainf differential}. 
We study 
$$0 = d_0 d_1 \nu_{k-1} + d_1 d_0 \nu_{k-1} = 
- d_0(\nu_k \star \nu_0) + d_1 d_0 \nu_{k-1} = 
-(d_0 \nu_k) \star \nu_0 - \nu_k \star \nu_1(\nu_0) + d_1 d_0 \nu_{k-1}.$$ 

Now expanding $d_0 \nu_{k-1}$ using the (assumed inductively) $\dAinf$ relation, and applying $d_1$ to the resulting terms using the (assumed inductively) property $d_1 \nu_j = -\nu_{j+1} \star \nu_0$, we have: 
\begin{eqnarray*}
    0 &=& 
    -(d_0 \nu_k) \star \kurv
    - \nu_k \star (\nu_1 \star \kurv)  +
    \sum_{\substack{p \geq 0, q \geq 1 \\ p+q=k-1}}\left((\nu_{p+2} \star \kurv) \star \nu_q - \nu_{p+1} \star (\nu_{q+1} \star \kurv)\right)  \\
    &=& 
    -(d_0 \nu_k) \star \kurv
    - \nu_k \star (\nu_1 \star \kurv)  +
    \sum_{\substack{p,q \geq 1 \\ p+q=k}}
    (\nu_{p+1} \star \kurv) \star \nu_q - \sum_{\substack{p \geq 0, q \geq 2 \\ p+q=k}} \nu_{p+1} \star (\nu_{q} \star \kurv)  \\
    &=& 
    -(d_0 \nu_k) \star \kurv
    +\sum_{\substack{p,q \geq 1 \\ p+q+r=k}}
    (\nu_{p+1} \star \kurv) \star \nu_q  - \sum_{\substack{p \geq 0, q \geq 1 \\ p+q=k}} \nu_{p+1} \star (\nu_{q} \star \kurv)  \\
    &=& \left(-d_0\nu_k-\sum_{\substack{p \geq 0, q \geq 1 \\ p+q=k}} \nu_{p+1} \star \nu_q\right) \star \kurv ,
\end{eqnarray*}
where in the last equality we have made use of the pre-Lie relation~\eqref{eq:pre-Lie}.
Because the left hand term in the final parenthesis is in $\rmQ_0$, it must vanish identically.
This completes the induction step.
$\square$


\section{Proof of \cref{main theorem}}

We prepare the ground with some lemmas and definitions.

\begin{lemma}
    Let $\rmT := [\alpha] \vee [\kurv]$ be the operad freely generated by arity zero elements 
    $\alpha, \kurv$ where $\alpha$ has degree $0$ and $\kurv$ has degree $(-1)$.  We give it the differential $d_{\rmT} \alpha = \kurv$.  

    Then $\calT := (\rmT, d_\rmT, \kurv, [\alpha])$
    is the terminal element of $\Curv$. 
\end{lemma}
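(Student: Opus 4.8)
The plan is to show that $\calT = (\rmT, d_\rmT, \kurv, [\alpha])$ lies in $\Curv$ and then that for every object $(\rmQ, d_\rmQ, \kurv_\rmQ, \rmQ_0)$ of $\Curv$ there is a unique morphism $\rmQ \to \calT$ in $\Curv$.

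\textbf{Step 1: $\calT$ is an object of $\Curv$.} Since $\rmT = [\alpha] \vee [\kurv]$, condition \eqref{free weights} holds with $\rmQ_0 = [\alpha]$ by construction. The weight grading counts the number of $\kurv$'s, so $\alpha$ has weight $0$, $\kurv$ has weight $1$, and a monomial $\alpha^{\vee a} \vee \kurv^{\vee b}$ (an arity-zero tree with $a$ copies of $\alpha$ and $b$ copies of $\kurv$) has weight $b$. The differential $d_\rmT$ is the unique derivation with $d_\rmT \alpha = \kurv$ and $d_\rmT \kurv = 0$; on a monomial it replaces one $\alpha$ by $\kurv$, hence raises weight by exactly $1$. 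So $d_\rmT = d_1$ and $d_0 = 0$, giving \eqref{strict mixed}; and $d_1 \kurv = d_\rmT \kurv = 0$ gives \eqref{d1 closed}. Here I should note that $\rmT(0)$ is spanned by the monomials above and $\rmT(n) = 0$ for $n \geq 1$, since neither generator has positive arity.

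\textbf{Step 2: existence and uniqueness of a morphism $\rmQ \to \calT$.} A morphism in $\Curv$ must be a dg operad map $f : \rmQ \to \rmT$ with $f(\kurv_\rmQ) = \kurv$ and $f(\rmQ_0) \subseteq [\alpha]$. Because $\rmT(n) = 0$ for $n \geq 1$, $f$ necessarily kills everything of positive arity, so $f$ is determined by its restriction to $\rmQ(0)$. By \eqref{free weights}, $\rmQ(0) = (\rmQ_0 \vee [\kurv_\rmQ])(0)$ is spanned by arity-zero trees whose leaves are decorated by arity-zero elements of $\rmQ_0$ and by $\kurv_\rmQ$'s. On such a monomial, compatibility with operadic composition forces $f$ to replace each arity-zero generator $x \in \rmQ_0$ by $f(x) \in [\alpha](0)$ and each $\kurv_\rmQ$ by $\kurv$, and then compose in $\rmT$. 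Thus $f$ on all of $\rmQ(0)$ is determined by $f|_{\rmQ_0(0)}$. It then remains to pin down $f$ on $\rmQ_0(0)$ using the differential: I claim $f$ must send $\rmQ_0(0)$ entirely to $0$. Indeed, $f(\rmQ_0(0)) \subseteq [\alpha](0)$, which is spanned by the $\alpha^{\vee a}$ in degree $0$; but elements of $\rmQ_0(0)$ can have various degrees, and in degree $\neq 0$ there is nothing to map to. In degree $0$, one uses that $d_\rmQ$ restricted to $\rmQ_0$ is $d_0$ (the $\kurv$-free part), while $d_\rmT$ on $[\alpha]$ is $d_\rmT \alpha = \kurv \notin [\alpha]$; chasing $f(d_0 x) = d_\rmT f(x)$ shows $f(x)$ must be $d_\rmT$-closed in $[\alpha](0)$, and the only $d_\rmT$-closed element of $[\alpha](0)$ is $0$ (since $d_\rmT \alpha^{\vee a} = a\, \alpha^{\vee(a-1)} \vee \kurv$ is a nonzero element of weight $1$ for $a \geq 1$, using that $\rmT$ is $\Z$-torsion-free). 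Hence $f|_{\rmQ_0(0)} = 0$, which both proves uniqueness and, conversely, shows that the unique candidate — namely $f$ zero on positive arities and on $\rmQ_0(0)$, and sending a monomial in $\rmQ(0)$ to the corresponding monomial in $\kurv$ alone (zero unless the monomial is $\kurv_\rmQ$ itself) — is well-defined; one checks it is a dg operad map compatible with the $\Curv$-data directly.

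\textbf{Main obstacle.} The only delicate point is Step 2: verifying that the forced assignment is actually a well-defined dg operad morphism, i.e.\ that sending every positive-arity element and every element of $\rmQ_0(0)$ to zero is compatible with $d_\rmQ = d_0 + d_1$ and with operadic composition. Compatibility with composition is immediate since the target has no positive arity. For the differential, one must check $d_\rmT f = f d_\rmQ$ on generators: on $\kurv_\rmQ$ both sides vanish ($d_1\kurv_\rmQ = 0 = d_0\kurv_\rmQ$ as $\kurv_\rmQ$ has weight $1$ and $d_0$ preserves weight so $d_0 \kurv_\rmQ \in \rmQ_1(0)$ — actually one needs $d_0 \kurv_\rmQ$ to map to $0$, which holds because $f$ kills $\rmQ_0$ and $d_0\kurv_\rmQ \in \rmQ_1$ is a $\kurv_\rmQ$-linear combination of $\rmQ_0$-elements, each landing in $0$); on generators of $\rmQ_0$, $f(d_\rmQ x) = f(d_0 x) + f(d_1 x)$ with $f(d_0 x) = 0$ since $d_0 x \in \rmQ_0$, and $f(d_1 x) = 0$ since $d_1 x \in (\kurv_\rmQ)$ maps into the span of $\kurv$-monomials of positive weight which — after applying $f$ — are built purely from $\kurv$ and hence... here one must be slightly careful: $f(d_1 x)$ need not be zero a priori, so the cleanest route is instead to define $f$ as the composite $\rmQ \twoheadrightarrow \rmQ/(\rmQ_0^{>0}, \text{positive arity}) \cong [\kurv]$-ish quotient and identify it, or simply to observe $\rmT$ is terminal by exhibiting $f$ as $\rmQ \to \rmQ_0 \vee [\kurv] \to 0 \vee [\kurv] = [\kurv] \hookrightarrow \rmT$ where the middle map kills $\rmQ_0$ — this is a dg map because $(\kurv) = (\kurv_\rmQ)$ is a dg ideal and the quotient $\rmQ/(\rmQ_0^{>0})$ in the appropriate sense is $[\kurv]$ with zero differential. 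I would finish by packaging Step 2 this way: the composite $\rmQ \to \rmQ / (\text{ideal generated by } \rmQ_0) \xrightarrow{\sim} \text{(operad on } \kurv) \hookrightarrow \rmT$ is a $\Curv$-morphism, and any $\Curv$-morphism to $\calT$ must coincide with it by the arity and degree constraints above.
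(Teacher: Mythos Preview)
Your Step~1 is fine, though two small corrections: as a unital operad, $\rmT(1) = \Z\cdot\id$, not $0$ (only $\rmT(n)=0$ for $n\ge 2$); and $[\alpha](0)$ is just $\Z\alpha$, not a span of ``monomials $\alpha^{\vee a}$'' --- an arity-zero operation has no inputs, so several copies of $\alpha$ cannot be composed.

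The genuine gap is in Step~2, where you conclude that the unique map must kill all of $\rmQ_0(0)$. This is false. Take $\rmQ_0 = [\beta]$ with $\beta$ of arity $0$ and degree $0$, and equip $\rmQ = \rmQ_0 \vee [\kurv]$ with the differential $d\beta = \kurv$, $d\kurv = 0$. This satisfies all the axioms of $\Curv$. Any $\Curv$-morphism $\Phi$ to $\calT$ then must satisfy $d_\rmT\Phi(\beta) = \Phi(d\beta) = \kurv$, which forces $\Phi(\beta) = \alpha \ne 0$. The error is in the line ``chasing $f(d_0 x) = d_\rmT f(x)$'': the correct identity is $f(d_\rmQ x) = d_\rmT f(x)$, and $d_\rmQ x = d_0 x + d_1 x$ can have a nonzero weight-one part. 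The sub-operad $\rmQ_0$ is only a \emph{graded} sub-operad by Definition~\ref{curv}, not a dg one, so $d_\rmQ|_{\rmQ_0}$ is not $d_0$ in general. Your attempted repair via the quotient by the ideal generated by $\rmQ_0$ also fails, since $\id \in \rmQ_0$ and that ideal is everything.

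The paper's (very short) argument handles exactly this point: for $\nu \in \rmQ_0$ of arity $0$ and degree $0$ one writes $d_1\nu = c\,\kurv$ for some constant $c$, and the dg condition then forces $\Phi(\nu) = c\,\alpha$. So the value of $\Phi$ on degree-zero arity-zero elements of $\rmQ_0$ is determined by the weight-one part of the differential, not automatically zero.
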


\begin{proof}
For $\calQ = (\rmQ, d_\rmQ, \kurv, \rmQ_0) \in \Curv$, 
we must show there is a unique morphism to $\calT$. 
It is obvious from the definition that such a morphism must have $\Phi (\kurv) = \kurv$, and if $\nu \in \rmQ_0$ is of non-zero arity or non-zero degree, then $\Phi(\nu)=0$.
Now consider an element $\nu \in \rmQ_0$  of arity $0$ and degree $0$.  Then $d_1 \nu = c \kurv$ for some constant $c$, and we must have $\Phi(\nu) = c \alpha$.
On the other hand, it is clear that the above prescriptions always determine a morphism. 
\end{proof}

Let $\rmP$ be any operad.  Recall that the notation $\hat{\vee}$ denotes the completed coproduct with respect to the filtration given by the number of $\alpha$. 
We equip $\rmP \, \hat{\vee} \, \rmT$ with the weight grading by number of $\kurv$, and write $d_\rmT$ for the differential on $\rmP \, \hat{\vee} \, \rmT$ which is zero on elements of $\rmP$ and satisfies $d_\rmT \alpha =\kurv$.  

\begin{lemma} \label{dT homology}
    The $\kurv$-weight zero homology (= kernel) of $d_\rmT$ is $\rmP \vee 0 \subset \rmP \, \hat{\vee} \, \rmT$.  The $\kurv$-weight one homology vanishes.  
\end{lemma}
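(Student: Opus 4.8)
The strategy is to reduce everything to a statement about the base operad $\rmT = [\alpha] \vee [\kurv]$ itself, and then use the explicit free structure. First I would observe that $\rmP \, \hat{\vee} \, \rmT$ decomposes, as a graded module, into a direct sum (completed in the $\alpha$-number) indexed by the ``words'' that alternate between elements of $\rmP$ and the two arity-zero generators $\alpha, \kurv$; since $d_\rmT$ kills $\rmP$ and acts only on the $\alpha$'s via $d_\rmT\alpha = \kurv$, it respects this decomposition in the sense that it acts within each ``slot'' independently. Concretely, an element of $\rmP\,\hat\vee\,\rmT$ is a (possibly infinite) sum of operadic compositions of elements of $\rmP$ with some number of arity-zero inputs plugged by $\alpha$'s and $\kurv$'s; $d_\rmT$ is the derivation replacing one $\alpha$ at a time by $\kurv$. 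The point is that $d_\rmT$ on $\rmP\,\hat\vee\,\rmT$ is (a completion of) a tensor product of the Koszul-type complex $(\,\cdots \to \Z\langle\alpha\rangle \to \Z\langle\kurv\rangle\,)$ sitting at each arity-zero slot, with $\rmP$ contributing no differential.

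Next I would make the weight-zero statement. In $\kurv$-weight zero, no $\kurv$ may appear, so the complex is concentrated in a single degree as far as $d_\rmT$ is concerned on the ``$\kurv$ side'': an element of weight zero is a sum of compositions of $\rmP$-elements with only $\alpha$'s plugged in. Applying $d_\rmT$ to such an element (with at least one $\alpha$) produces a nonzero weight-one element, because distinct monomials in the $\alpha$'s map to distinct monomials once one $\alpha$ is turned into $\kurv$ — here freeness of $\rmT$ in $\alpha$ and $\kurv$, together with freeness of the coproduct $\rmP\,\hat\vee\,\rmT$, guarantees no cancellation. Hence the kernel in weight zero consists exactly of the elements with no $\alpha$ at all, i.e.\ $\rmP \vee 0$, and the image of $d_\rmT$ from weight zero into weight one is the span of all monomials with exactly one $\kurv$ and the rest $\alpha$'s.

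For the weight-one statement I need both that this image is all of $\ker(d_\rmT)$ in weight one and that there is no higher homology obstructing it — but since $d_\rmT$ increases $\kurv$-weight by one, in weight one we only need exactness at the middle of $(\text{weight }0) \xrightarrow{d_\rmT} (\text{weight }1) \xrightarrow{d_\rmT} (\text{weight }2)$. A weight-one element is a sum of monomials each with exactly one $\kurv$; $d_\rmT$ sends such a monomial to the sum over its $\alpha$-slots of the monomial with that $\alpha$ replaced by a second $\kurv$. The standard contracting-homotopy argument applies: define $h$ on weight-one monomials by picking, say, the first $\kurv$ (in some fixed total order on the slots) and turning it back into an $\alpha$, i.e.\ the obvious partial inverse to $d_\rmT$; then $d_\rmT h + h\, d_\rmT = \mathrm{id}$ on weight one, by the same combinatorics that makes the Koszul complex of $\Z[\alpha] \to \Z$ exact. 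I expect the main obstacle to be purely bookkeeping: setting up the indexing of monomials and the total order on arity-zero slots carefully enough that the homotopy is well-defined and the signs (from the degree-$(-1)$ generator $\kurv$ and the operadic Koszul sign rule) work out, and checking that the completion with respect to $\alpha$-number does not interfere — which it does not, since $d_\rmT$ and $h$ both preserve $\alpha$-number-plus-$\kurv$-number and act slot-wise, so the identity $d_\rmT h + h d_\rmT = \mathrm{id}$ holds term by term and passes to the completion.
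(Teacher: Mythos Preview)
Your overall strategy---fix a tree shape so that $d_\rmT$ becomes the differential on a tensor power $C^{\otimes n}$ of the acyclic two-term complex $C = (\Z\alpha \xrightarrow{d} \Z\kurv)$, then use a contracting homotopy---is sound; the paper even notes in the remark following the lemma that this K\"unneth reduction gives a clean alternative to its own proof, which is instead a direct tree-by-tree chase of the equation $d_\rmT\lambda=0$. Your weight-zero paragraph is also correct and matches the paper.

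The gap is in your explicit homotopy. On a fixed tree shape with $n$ arity-zero slots, write $e_i$ for the weight-one monomial with $\kurv$ in slot $i$ and $\alpha$ elsewhere. Your rule ``turn the first $\kurv$ into $\alpha$'' sends every $e_i$ to the all-$\alpha$ monomial $e_\emptyset$, so $d_\rmT h(e_i)=\pm\sum_j e_j$ always has a nonzero $e_1$-component. But $h\,d_\rmT(e_i)$ is a combination of $e_j$'s with $j\ge i$ only, since removing the \emph{smaller}-indexed $\kurv$ from a weight-two monomial with $\kurv$'s at $\{i,k\}$ yields $e_{\max(i,k)}$. Hence for $i\ge 2$ the sum $(d_\rmT h+h\,d_\rmT)(e_i)$ is not $e_i$, and no choice of signs repairs this. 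Equivalently, on the closed element $\sum_i e_i$ your homotopy gives $d_\rmT h(\sum_i e_i)=\pm n\sum_i e_i$, which over $\Z$ only exhibits $n$ times the cycle as a boundary. The fix is a one-word change: act on the first \emph{slot} rather than the first $\kurv$. With $h_C(\kurv)=\alpha$, $h_C(\alpha)=0$ and $h=h_C\otimes 1^{\otimes n-1}$, one has $d_\rmT h + h\,d_\rmT=1$ exactly, and the argument goes through.
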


\begin{proof}
    Weight zero: 
    since $d_\rmT$ is homogenous in $\alpha$, it suffices to consider elements $\lambda$ which are sums of trees with exactly $(k+1)$ leaves decorated by $\alpha$.
    Let $t$ denote one of those trees. 
    Its image $d_\rmT(t)$ is the sum of all the trees~$t'$ that can be obtained from $t$ by replacing a leaf's decoration $\alpha$ by $\kurv$ (there are exactly $(k+1)$ such trees~$t'$ in $d_\rmT(t)$). 
    By freeness of $\alpha$ and $\kurv$, no two of these trees can cancel each other. 
    For the same reason, since the trees appearing in the images $d_\rmT(s),d_\rmT(t)$ of two distinct trees $s,t$ in~$\lambda$ are all distinct, no two of them can cancel each other. 
    Therefore, if $d_\rmT(\lambda)=0$, we must have that~$\lambda=0$.

    Weight one: again by homogeneity in $\alpha$, 
    it suffices to consider element $\lambda$ given by  sums of trees  with one leaf $\ell_0$ decorated by $\kurv$ and $k$ leaves $\ell_1, \ldots, \ell_k$ decorated by~$\alpha$.
    Let $t_0$ be such a tree. 
    The image $d_\rmT(t_0)$ has one term which is the tree $t_0'$ with $\ell_0$ and $\ell_1$ decorated by $\kurv$, and the other leaves decorated by $\alpha$.
    Since $d_\rmT(\lambda)=0$, and by freeness of $\alpha$ and $\kurv$, the sum $\lambda$ must also contain the only other tree $t_1$ whose image $d_\rmT(t_1)$ contains $t_0'$, that is, the tree $t_1$ with $\ell_1$ decorated by $\kurv$ and the other leaves decorated by $\alpha$.
    Now, the image $d_\rmT(t_1)$ has one term which is the tree $t_1'$ with leaves $\ell_1$ and $\ell_2$ decorated by $\kurv$, and the other leaves decorated by~$\alpha$.
    Since $d_\rmT(\lambda)=0$, the sum $\lambda$ must also contain the tree $t_2$, with the leaf $\ell_2$ decorated by~$\kurv$ and the other leaves decorated by $\alpha$.
    Continuing in this fashion, we obtain that $\lambda$ must contain $d_\rmT(t^\alpha)$, where $t^\alpha$ is the tree $t$ with all the leaves $\ell_0,\ldots, \ell_k$ decorated by~$\alpha$.
    Repeating the process for every distinct tree~$t$ in~$\lambda$, and using again the freeness of~$\alpha$ and~$\kurv$, we obtain the desired element $\rho \eqdef \sum_{t \in \lambda} t^\alpha$.
    The uniqueness of $\rho$ follows from the weight zero result.
\end{proof}

\begin{remark}
\label{rmk:model-category-argument}
     There is a more conceptual though less explicit proof of \cref{dT homology}.
     Since $\rmT$ is generated by elements in arity $0$, the underlying 
     module of $\rmP \, \hat{\vee} \, \rmT$ is the composite product $\rmP \circ \rmT$ (see \cite[Section 6.2.1]{LodayVallette12}).
     Using the operadic K\"unneth formula \cite[Proposition 6.2.3]{LodayVallette12}, we get that the underlying module of $H_*(\rmP \, \hat{\vee} \, \rmT, d_\rmT)$ is $H_*(\rmP \circ \rmT, d_\rmT) \cong H_*(\rmP, 0) \circ H_*(\rmT, d_\rmT)$. 
     Since $(\rmT, d_\rmT)$ is acyclic, the latter is equal to $\rmP$.
     The result of \cref{dT homology} follows.
\end{remark}

Before turning to the proof of~\cref{main theorem}, we define the candidate for the right adjoint of the forgetful functor
\begin{equation*}
    \begin{matrix}
        \calL & : & \Curv & \to & (\cAinf \downarrow \dgOp) \\
        & & \calQ := (\rmQ, d_\rmQ, \mu_0, \rmQ_0) & \mapsto & (\rmQ, d_\rmQ).
    \end{matrix}
\end{equation*}
Given a morphism $((\cAinf, d_{\cAinf}) \to (\rmP, d_\rmP))$ in the category $(\cAinf \downarrow \dgOp)$ and $n \geq 0$, we let 
    \[\mu_n^\alpha \eqdef \sum_{r_0,\ldots,r_n \geq 0} \mu_{n+r_0+\cdots+r_n} (\alpha^{r_0},-,\alpha^{r_1},-,\ldots,\alpha^{r_n}) \in \rmP \, \hat{\vee} \, [\alpha]. \]

\begin{def-prop}
\label{def-prop:adjoint}
    There is a functor $\calR : \cMult \to \Curv$ acting on objects as $((\cAinf, d_{\cAinf}) \to (\rmP, d_\rmP)) \mapsto (\rmP \, \hat{\vee} \, \rmT, d, \kurv + \mu_0^\alpha, \rmP \, \hat{\vee} \, [\alpha])$, where
    \[d_{|\rmP} = d_\rmP, \quad d \alpha = \kurv = -\mu_0^{\alpha} + (\kurv + \mu_0^{\alpha}), \quad d \kurv = 0. \]
    and acting on morphisms as $f \mapsto f \vee 1_{\rmT}$.
    The unique morphism $c \calA_{\infty} \to \calR(\rmP, d_\rmP)$ in $\Curv$ sends $\mu_n$ to $\mu_n^{\alpha}$ for $n \geq 1$.
\end{def-prop} 
\begin{proof}
    We need to check that the image under $\calR$ of an object is a well defined object in $\Curv$.
    The only non-trivial thing to check is condition $(\ref{d1 closed})$ in \cref{curv}.
    This holds since $d_1(\kurv + \mu_0^{\alpha}) = d_1^2 \alpha = 0$.

    It remains to show that the assignment $\mu_0 \mapsto (\kurv + \mu_0^{\alpha})$ and $\mu_n \mapsto \mu_n^{\alpha}$ for $n \geq 1$ defines a morphism $c \calA_{\infty} \to \calR(\rmP, d_\rmP)$ in $\Curv$.
    To see that the latter is a dg morphism, observe that it is the composition of the map $\cAinf \to \cAinf \, \hat{\vee} \, \rmT$ of \cref{prop:twist-morphism-ns}, and the map $\cAinf \, \hat{\vee} \, \rmT \to \rmP \, \hat{\vee} \, \rmT$ induced by the structural morphism $\cAinf \to \rmP$.
    Once this property is established, it is easy to see that it defines a morphism in $\Curv$.
\end{proof}

We now turn to the proof of the theorem. 

\begin{proof}[Proof of \cref{main theorem}]
Given $f: (\rmQ, d_\rmQ) \to (\rmP, d_\rmP)$ over $\cAinf$, we want to prove that there exists a unique morphism $\Phi \in \hom_{\Curv}(\calQ, \calR(\rmP,d_\rmP))$ such that $\epsilon_{\calQ} \circ \calL \Phi = f$.

\noindent
We will use the additional complete filtration on $\rmP \, \hat{\vee} \, \rmT$ by the number of $\alpha$ appearing.  
Given a $\Phi : \rmQ \to \rmP \, \hat{\vee} \, \rmT$, we split $\Phi = \sum \Phi^k$ for the decomposition into homogenous pieces in~$\alpha$. 

We start with uniqueness.
Suppose given two maps with the desired property, i.e. some 
$\Phi_-, \Phi_+ \in \hom_{\Curv}(\calQ, \calR(\rmP, d_\rmP))$ such that $\epsilon_{\calQ} \circ \calL \Phi_\pm = f$.
Since $\rmQ_0 \vee [\mu_0] \xrightarrow{\sim} \rmQ$
and any morphism in $\Curv$ has fixed behaviour on $\mu_0$, 
it is enough to check 
$\Phi_+ = \Phi_-$ on elements of $\rmQ_0$. 
Now for $\nu \in \rmQ_0$, $\Phi_{\pm}(\nu) \in \rmP \, \hat{\vee} \, [\alpha]$; since $\epsilon_{\calQ} \circ \calL \Phi_\pm = f$, we must have $\Phi_{\pm}^0(\nu) = f(\nu)$. 

As $\Phi_{\pm}$ are dg operad morphisms and only $d_\rmT$ affects the number of $\alpha$, we have: 
\[d_{\rmT} \circ \Phi^{k+1}_\pm  = \Phi^k_{\pm} \circ d_\rmQ - d_\rmP \circ \Phi^k_{\pm}. \]
Suppose inductively that  $\Phi^k_+ = \Phi^k_-$.  Then for $\nu \in \rmQ_0$, we see that $(\Phi^{k+1}_+(\nu) - \Phi^{k+1}_-(\nu))$ is a weight zero element in the kernel of  $d_\rmT$. 
According to Lemma \ref{dT homology}, we get $\Phi^{k+1}_+(\nu) = \Phi^{k+1}_-(\nu)$.
This concludes the proof of uniqueness.

We now prove the existence
by constructing, inductively in $k$, the desired map $\Phi = \sum \Phi^k$.  
Since the behaviour of $\Phi$ is fixed on $\mu_0$, we have to construct $\Phi$ on $\rmQ_0$.
We set $\Phi^0|_{\rmQ_0}:=  f|_{\rmQ_0}$. 

The following additional properties will ensure $\Phi := \sum_{j \geq 0} \Phi^j : \calQ \to \calR(\rmP)$
is a morphism such that $\epsilon_{\calQ} \circ \calL \Phi = f$: 
\begin{enumerate}
    \item\label{item filtration property} 
    For every $j \in \{0, \dots, k \}$ and $\nu \in \rmQ_0$, we have $\Phi^j(\nu) \in \rmP \, \hat{\vee} \, [\alpha]$.
    \item\label{item operad property} 
    For every $j \in \{0, \dots, k \}$ and $\nu_1, \nu_2 \in \rmQ_0$, we have
    \[\Phi^j(\nu_1 \circ_i \nu_2) = \sum_{\substack{p, q \geq 0 \\ p+q=j}} \Phi^p(\nu_1) \circ_i \Phi^q(\nu_2). \]
    \item\label{item dg property} 
    For every $j \in \{0, \dots, k-1\}$, we have on $\rmQ_0$
    \[d_\rmT \circ \Phi^{j+1} = \Phi^j \circ d_\rmQ - d_\rmP \circ \Phi^j. \]
    \item\label{item expression on mu1} For every $j \in \{0, \dots, k \}$ and $n \geq 1$, 
    \[\Phi^j(\mu_n) = \mu_n^{\alpha,j} \eqdef \sum_{\substack{r_0, \dots, r_n \geq 0 \\ r_0 + \dots + r_n = j}} \mu_{n+j} (\alpha^{r_0},-,\alpha^{r_1},-,\ldots,\alpha^{r_n}). \] 
\end{enumerate}
    
Assume now that, given $k \in \Z_{\geq 0}$, we defined $\Phi^0, \Phi^1, \ldots, \Phi^k$ satisfying the properties above.
Given $\nu \in \rmQ_0$, we consider 
\[\lambda^k(\nu) := (\Phi^k \circ d_\rmQ - d_\rmP \circ \Phi^k) (\nu). \]
By construction, $\lambda^0(\nu)$ has no $\alpha$, so $d_\rmT \lambda^0(\nu)=0$. If $k \geq 1$: 
\begin{eqnarray*}
    d_\rmT(\lambda^k(\nu)) &=& (d_\rmT \circ \Phi^k \circ d_\rmQ - d_\rmT \circ d_\rmP \circ \Phi^k) (\nu) \\
    &=& (d_\rmT \circ \Phi^k \circ d_\rmQ + d_\rmP \circ d_\rmT \circ \Phi^k) (\nu) \\
    &=&  ((\Phi^{k-1} \circ d_\rmQ - d_\rmP \circ \Phi^{k-1})\circ d_\rmQ + d_\rmP \circ (\Phi^{k-1} \circ d_\rmQ - d_\rmP \circ \Phi^{k-1})) (\nu) 
    = 0.
\end{eqnarray*} 
By construction, $\lambda^0(\nu) = (\Phi^0-f)(d_1 \nu)$ has $\kurv$-weight one.
If $k\geq 1$, then by 
\eqref{item filtration property}, we have that $\Phi^k(\nu)$ and hence $d_\rmP \Phi^k(\nu)$ has $\kurv$-weight zero, while $\Phi^k(d_\rmQ \nu)$ has $\kurv$-weight $\le 1$. 
So $\lambda^k(\nu)$ is an element of positive $\alpha$-filtration and $\kurv$-weight $\le 1$ such that $d_\rmT \lambda^k(\nu) = 0$.
According to \cref{dT homology} there exists a unique $\rho \in \rmP \, \hat{\vee} \, [\alpha]$ such that $\lambda = d_\rmT(\rho)$.
We define $\Phi^{k+1}(\nu) := \rho$.

We have now defined $\Phi^{k+1}$ on $\rmQ_0$.
By construction, it satisfies $(\ref{item filtration property})$ and $(\ref{item dg property})$. 
Using the assumptions satisfied by $(\Phi^j)_{0 \leq j \leq k}$, it is straightforward to show that, for every $\nu_1, \nu_2 \in \rmQ_0$, the sum
\[\sum_{\substack{p, q \geq 0 \\ p+q=k+1}} \Phi^p(\nu_1) \circ_i \Phi^q(\nu_2) \in \rmP \, \hat{\vee} \, [\alpha] \]
is mapped by $d_\rmT$ to $(\Phi^k \circ d_\rmQ - d_\rmP \circ \Phi^k) (\nu_1 \circ_i \nu_2)$.
But we have already seen
(from \cref{dT homology}) 
that $\Phi^{k+1}(\nu)$ is the unique element with this property. This establishes $(\ref{item operad property})$.

In order to see that $(\ref{item expression on mu1})$ holds, it is enough to check the computation
\begin{align*}
    \Phi^k(d_\rmQ \mu_n) - d_\rmP(\Phi^k \mu_n) & = \Phi^k\left(- \sum_{\substack{p,q \geq 0 \\ p+q=n}} \mu_{p+1} \star \mu_q \right) - d_\rmP (\mu_n^{\alpha,k}) \\
    & = - \sum_{\substack{i, j \geq 0 \\ i+j=k}} \sum_{\substack{p,q \geq 0 \\ p+q=n}} \Phi^i(\mu_{p+1}) \star \Phi^j(\mu_q) - d_\rmP (\mu_n^{\alpha,k}) \\
    & = - \sum_{\substack{i, j \geq 0 \\ i+j=k}} \sum_{\substack{p,q \geq 0 \\ p+q=n}} \mu_{p+1}^{\alpha,i} \star \mu_q^{\alpha,j} - \mu_{n+1}^{\alpha,k} \star \kurv  - d_\rmP (\mu_n^{\alpha,k}) \\
    & = - \mu_{n+1}^{\alpha,k} \star \kurv = d_\rmT (\mu_n^{\alpha,k+1}).
\end{align*}
This finishes the proof.
\end{proof}


\section{The symmetric case}
\label{section: symmetric}

Here we prove Theorems \ref{thm:morphism-sym} and \ref{main theorem symmetric}, which are the versions of our main results for symmetric operads.  The sole difference in the proofs concern the lemmas invoked in the arguments; the remainder, which we omit, is formally identical: one literally has to replace ``non-symmetric'' by ``symmetric'' and the operad $\cAinf$ by the operad $\cLinf$.

\subsection{Conventions}

From here on, the word ``operad'' will mean symmetric operad in graded vector spaces over a field of characteristic zero $\K$, and the expression ``dg operad'' will mean symmetric operad in differential graded vector spaces over $\K$.
In the latter case, we will use homological degree conventions. 
We refer to \cite{LodayVallette12} for a more details on algebraic operads.

\subsection{Basic definitions} 

Let $\mathbb{S}_n$ denote the symmetric group of degree~$n$.
Recall that a dg $\mathbb{S}$-module is a family of dg vector spaces $\{ \rmP(n) \}_{n \geq 0}$ endowed with an action of $\mathbb{S}_n$ for each $n$. 
Given a a graded dg $\mathbb{S}$-module $(A, d_A)$, the symmetric group action on the endomorphism dg operad $\End_{(A,d_A)}$ is given by permuting the factors in $A^{\otimes n}$.

\begin{definition}
    The shifted \defn{L-infinity dg operad}, denoted $\Linf$, is the free operad on generators~$\ell_n$, $n\geq 2$, of arity $n$ and degree $(-1)$, endowed with the differential
    \begin{equation} \label{linf differential}
        d_{\Linf}(\mu_n) \eqdef -\sum_{\substack{p \geq 1, q \geq 2 \\ p+q=n}} \sum_{\sigma \in \Sh_{p,q}^{-1}}(\ell_{p+1} \circ_{1} \ell_q)^\sigma \ .
    \end{equation}
\end{definition}

\begin{definition}
    The shifted \defn{augmented L-infinity dg operad}, denoted $\dLinf$, is the free operad on generators~$\ell_n$, $n\geq 1$, of arity $n$ and degree $(-1)$, endowed with the differential
    \begin{equation} \label{dlinf differential}
        d_{\dLinf}(\mu_n) \eqdef -\sum_{\substack{p \geq 0, q \geq 1 \\ p+q=n}} \sum_{\sigma \in \Sh_{p,q}^{-1}}(\ell_{p+1} \circ_{1} \ell_q)^\sigma \ .
    \end{equation}
\end{definition}

\begin{definition}
    The shifted \defn{curved L-infinity dg operad}, denoted $\cLinf$, is the free operad on generators $\ell_n$, $n\geq 0$, of arity $n$ and degree $(-1)$, endowed with the differential
    \begin{equation}\label{clinf differential}
        d_{\cLinf}(\ell_n) \eqdef -\sum_{\substack{p,q \geq 0 \\ p+q=n}} \sum_{\sigma \in \Sh_{p,q}^{-1}}(\ell_{p+1} \circ_{1} \ell_q)^\sigma \ .
    \end{equation}
    The symmetric group action on the generators is given by $\ell_n^\sigma=\ell_n$ for any $\sigma \in \mathbb{S}_n$.
\end{definition}

For a symmetric operad $\rmP$, the \defn{pre-Lie product} of two elements $\mu \in \mathrm{P}(m)$ and $\nu \in \mathrm{P}(n)$ is defined by
\begin{equation}
\label{pre-Lie-prod-sym}
    \mu \star \nu \eqdef \sum_{\sigma \in \Sh_{m-1,n}^{-1}}(\mu \circ_1 \nu)^{\sigma}.
\end{equation}
It is usually defined on the totalization $\Tot(\rmP) \eqdef \oplus_n \rmP(n)^{\Sym_n}$ of $\rmP$, and it satisfies the pre-Lie relation~\eqref{eq:pre-Lie}.
Note that if $\nu$ has arity zero, the pre-Lie product then reduces to $\mu \star \nu = \mu \circ_1 \nu$.
One can for instance rewrite the $\cLinf$ relations~\eqref{clinf differential} as
\[ 
d_{\cLinf}(\ell_n) = - 
\sum_{\substack{p,q \geqslant 0 \\ p+q=n}}
 \ell_{p+1} \star \ell_q .
 \]


\subsection{Proof of \cref{thm:morphism-sym}}

\begin{lemma} 
\label{bracket acyclicity symmetric case} 
    Let $\kurv$ be an arity zero operation of odd degree, and $\mathrm{P} = \mathrm{P}_0 \vee [\kurv]$. 
    Then $p \mapsto p \star \kurv$ is a differential on the totalization $\Tot(\rmP)$, whose only homology consists of arity zero elements. 

    In particular, if $p \in \rmP(n)^{\Sym_n}$ is of arity $n \geq 1$, of $\kappa$-weight $1$, and satisfies $p \star \kurv=0$, then there exists a unique $p_0 \in \rmP_0(n+1)^{\Sym_{n+1}}$ such that $p = p_0 \star \kappa$.
\end{lemma}

\begin{proof}
    The fact that $p \mapsto p \star \kurv$ is a differential on the totalization of $\rmP$ follows from the graded associativity of the operadic composition and the fact that $p$ is $\Sym_n$-invariant.

    We now prove the statement about the homology of this differential.
    Let $p \in \mathrm{P}(n)^{\Sym_n}$ be an element of arity $n \geq 1$ such that $p \star \kurv = 0$.
    If $p$ is of weight $0$ (i.e. $p \in \mathrm{P}_0(n)^{\Sym_n}$), then $p=0$ since $\mathrm{P} = \mathrm{P}_0 \vee [\kurv]$.
    Assume that $p$ is of weight $k \geq 1$, and write $p = p_0 \star \kurv$ with $p_0 \in \mathrm{P}(n+1)$ of weight $(k-1) \geq 0$. 
    We need to show that $p_0$ is in fact invariant under the action of $\Sym_{n+1}$.
    Note that  
    $(p_0 \star \kurv) \star \kurv = p \star \kurv = 0$ implies $p_0=p_0^{(12)}$.
    Moreover, $p$ is invariant under the action of $\Sym_n$ by assumption. Therefore, the equation $p = p_0 \star \kurv$ implies that $p_0$ is invariant under the action of any $\sigma \in \Sym_{n+1}$ which fixes~$1$. 
    Combining this with the fact that $p_0$ is invariant under the transposition $(12)$, we deduce that $p_0$ is invariant under the action of the whole group $\Sym_{n+1}$.
    This proves the statement about the homology of $- \star \kurv$.

The remainder of the proof is obtained \emph{mutatis mutandis} from the proof of \cref{bracket acyclicity}.
\end{proof}

The remainder of the proof of \cref{thm:morphism-sym} is obtained \emph{mutatis mutandis} from the proof of \cref{thm:morphism}, using \cref{bracket acyclicity symmetric case} in place of \cref{bracket acyclicity}, and the symmetric pre-Lie product~\eqref{pre-Lie-prod-sym} in place of the non-symmetric one~\eqref{pre-Lie-prod}.


\subsection{Proof of \cref{main theorem symmetric}}

The essential ingredient needed for the symmetric case is the symmetric version of \cref{dT homology}.

\begin{lemma}
\label{dT homology symmetric}
    The $\kurv$-weight zero homology (= kernel) of $d_\rmT$ is $\rmP \vee 0 \subset \rmP \, \hat{\vee} \, \rmT$. 
    The $\kurv$-weight one homology vanishes.
    
    Moreover, if $\lambda = \mu(\alpha^k, \kurv, -)$ is $d_\rmT$-closed with $\mu \in \rmP$ and $k\geq0$, then 
    \[\lambda = d_{\rmT} \left( \frac{(-1)^{|\mu|}}{k+1} \mu(\alpha^{k+1}, -) \right). \]
\end{lemma}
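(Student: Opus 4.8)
The statement has three parts: the weight-zero homology computation, the weight-one homology computation, and the explicit contracting homotopy for closed elements of the special form $\mu(\alpha^k,\kurv,-)$. For the first two parts I would run the same tree-by-tree combinatorial argument as in the proof of \cref{dT homology}: since $d_\rmT$ is homogeneous in the number of $\alpha$ and since $\rmP \, \hat{\vee} \, \rmT$ has underlying $\Sym$-module the composite product $\rmP \circ \rmT$ (each slot of a corolla in $\rmP$ decorated by a planar—here, by symmetry, unordered—monomial in $\alpha,\kurv$), I can work one $\rmP$-corolla at a time. Freeness of $\alpha$ and $\kurv$ in $\rmT$ means no cancellation occurs between distinct decorated corollas, so the problem reduces to the acyclicity of the two-variable free operad $\rmT = [\alpha]\vee[\kurv]$ with $d_\rmT\alpha = \kurv$ in each fixed arity/weight—equivalently, to the statement that the Koszul-type complex $(\K[\alpha]\otimes\Lambda[\kurv], d)$ with $d\alpha=\kurv$ is acyclic, which is classical. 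The only new wrinkle over \cref{dT homology} is equivariance: the $\Sym_n$-action permutes the decorated slots, but since the homotopy I construct is $\Sym_n$-equivariant (it acts slot-wise and symmetrically), this causes no trouble.

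\textbf{The explicit homotopy.} For the third part, I would just verify the formula directly. Write $\lambda = \mu(\alpha^k,\kurv,-)$, meaning the first $k$ slots are decorated by $\alpha$, the $(k+1)$-st by $\kurv$, and the rest left as inputs (up to the $\Sym$-action, which is why we may take this normal form). Set $\rho \eqdef \frac{(-1)^{|\mu|}}{k+1}\,\mu(\alpha^{k+1},-)$. Applying $d_\rmT$, which by the Leibniz rule replaces each $\alpha$ in turn by $\kurv$ with the appropriate Koszul sign, gives
\[
d_\rmT(\rho) = \frac{(-1)^{|\mu|}}{k+1}\sum_{i=1}^{k+1} (-1)^{|\mu|}\,\mu(\alpha^{i-1},\kurv,\alpha^{k+1-i},-) = \frac{1}{k+1}\sum_{i=1}^{k+1}\mu(\alpha^{i-1},\kurv,\alpha^{k+1-i},-).
\]
Now each summand equals $\lambda$ after applying the transposition in $\Sym_{k+1}$ that moves the unique $\kurv$-slot to position $k+1$; since $\kurv$ has odd degree and $\alpha$ even degree, and since the decorations are arity-zero (so no further inputs are permuted past each other), the Koszul sign of this transposition is $+1$. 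Hence $d_\rmT(\rho) = \frac{1}{k+1}\cdot(k+1)\,\lambda = \lambda$, as claimed. The hypothesis that $\lambda$ is $d_\rmT$-closed is in fact not needed for this identity—it holds for any such $\lambda$—but it is the natural context, since it exhibits $\lambda$ as a boundary.

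\textbf{Main obstacle.} The genuinely delicate point is bookkeeping the signs: one must confirm that permuting an $\alpha$ past a $\kurv$ (both arity zero, hence both "pure coefficients" sitting in a single slot) contributes a trivial sign, so that all $k+1$ terms in $d_\rmT(\rho)$ are literally equal as elements of the symmetric operad rather than merely equal up to sign. This is where the convention that $\ell_n^\sigma = \ell_n$ on generators and the arity-zero-ness of $\alpha,\kurv$ get used; once that is pinned down the computation is immediate. The homology statements themselves require no new idea beyond \cref{dT homology} and the remark following it, so I would state them as a direct adaptation and spend the writeup on the homotopy formula and its sign check.
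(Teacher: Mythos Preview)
Your treatment of the explicit homotopy contains a genuine error. You assert that each summand $\mu(\alpha^{i-1},\kurv,\alpha^{k+1-i},-)$ in $d_\rmT(\rho)$ equals $\lambda=\mu(\alpha^k,\kurv,-)$ ``after applying the transposition in $\Sym_{k+1}$ that moves the unique $\kurv$-slot to position $k+1$.'' But in a symmetric operad, permuting the inputs of $\mu$ does not leave $\mu$ fixed: the coinvariant relation in $\rmP\circ\rmT$ gives
\[
\mu(\alpha^{i-1},\kurv,\alpha^{k+1-i},-)=\mu^{(i,\,k+1)}(\alpha^k,\kurv,-)
\]
(your sign check is correct), and there is no reason for $\mu^{(i,k+1)}=\mu$ when $\mu\in\rmP$ is arbitrary. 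Concretely, take $\rmP$ free on one binary generator $\mu$ and $k=1$: then $d_\rmT\bigl(\tfrac{(-1)^{|\mu|}}{2}\mu(\alpha,\alpha)\bigr)=\tfrac12\bigl(\mu+\mu^{(12)}\bigr)(\alpha,\kurv)\ne\mu(\alpha,\kurv)$, and indeed $\mu(\alpha,\kurv)$ is not $d_\rmT$-closed here. So the closedness hypothesis is \emph{not} superfluous. The paper first replaces $\mu$ by its $\Sym_k$-symmetrization $\nu$ (which leaves $\lambda$ unchanged), then uses $d_\rmT\lambda=0$ to deduce $\nu(\alpha^{i-1},\kurv,\alpha^{k-i},\kurv,-)=0$ for each $i\le k$; since $\kurv$ is odd this yields exactly the missing invariance $\nu^{(i,k+1)}(\alpha^k,\kurv,-)=\nu(\alpha^k,\kurv,-)$, after which your computation goes through.

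A smaller point on the first two claims: the tree-by-tree argument of \cref{dT homology} rests on linear independence of distinct decorated planar trees, which is spoiled in the symmetric setting by coinvariant relations such as $\mu(\kurv,\kurv)=-\mu^{(12)}(\kurv,\kurv)$. The K\"unneth argument you allude to does carry over, and the paper instead gives a direct computation using the same symmetrization trick as above; either route is fine, but a literal port of the non-symmetric combinatorics is not.
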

\begin{proof}
    We start with an observation: given an operation $\nu \in \calP$ of arity $n$ and $0 \leq i \leq j \leq n$, we have $\mu^{\sigma}(\alpha^j,-) = \mu(\alpha^j,-)$ for every $\sigma$ in $\Sym_i$ (seen as a subgroup of $\Sym_n$) since $\alpha$ is of even degree.

    Weight zero: let $\lambda$ in positive $\alpha$-filtration and of $\kurv$-weight zero such that $d_{\rmT} (\lambda) = 0$.
    Now let $\mu \in \calP$ such that $\lambda = \mu(\alpha^{k+1}, -)$.
    Let $\nu := \frac{1}{(k+1)!} \sum_{\sigma \in S_{k+1}} \mu^\sigma$, so that $\nu^{\sigma} = \nu$ for every $\sigma \in \Sym_{k+1}$.
    According to the observation in the beginning of the proof, we have $\nu(\alpha^{k+1}, -) = \mu(\alpha^{k+1}, -)$.
    In particular, we have $\lambda = \nu(\alpha^{k+1}, -)$.
    Now we have
    \begin{align*}
        d_{\rmT} (\lambda)  = d_{\rmT} (\nu(\alpha^{k+1}, -)) 
        & = (-1)^{|\nu|} \sum_{i=1}^{k+1} \nu(\alpha^{i-1}, \kurv, \alpha^{k+1-i}, -) \\ 
        & = (-1)^{|\nu|} \sum_{i=1}^{k+1} \nu^{(i,k+1)}(\alpha^{k}, \kurv, -) \\
        & = (-1)^{|\nu|} (k+1) \, \nu(\alpha^{k}, \kurv, -).
    \end{align*}
    Therefore the assumption $d_{\rmT} (\lambda) = 0$ implies $\nu(\alpha^{k}, \kurv, -) = 0$.
    This implies that $\nu(\alpha^k,-) = 0$, and therefore $\lambda = \nu(\alpha^{k+1}, -) = 0$.

    Weight one: let $\lambda$ of $\kurv$-weight one such that $d_{\rmT} (\lambda) = 0$.
    Now let $\mu \in \calP$ such that $\lambda = \mu(\alpha^k, \kurv, -)$. 
    Let $\nu := \frac{1}{k!} \sum_{\sigma \in \Sym_k} \mu^\sigma$, so that $\nu^{\sigma} = \nu$ for every $\sigma \in \Sym_k$.
    According to the observation in the first paragraph of the proof, we have $\nu(\alpha^k, -) = \mu(\alpha^k, -)$ and $\nu(\alpha^{k+1}, -) = \mu(\alpha^{k+1}, -)$.
    In particular, we have $\lambda = \nu(\alpha^k, \kurv, -)$.
    Now given $i_0 \in \{1, \dots, k\}$, we have
    \begin{align*}
        d_{\rmT} (\lambda)  = d_{\rmT} (\nu(\alpha^k, \kurv, -)) 
        & = (-1)^{|\nu|} \sum_{i=1}^k \nu(\alpha^{i-1}, \kurv, \alpha^{k-i}, \kurv, -) \\ 
        & = (-1)^{|\nu|} \sum_{i=1}^k \nu^{(i,i_0)}(\alpha^{i_0-1}, \kurv, \alpha^{k-i_0}, \kurv, -) \\
        & = (-1)^{|\nu|} k \, \nu(\alpha^{i_0-1}, \kurv, \alpha^{k-i_0}, \kurv, -).
    \end{align*}
    Therefore the assumption $d_{\rmT} (\lambda) = 0$ implies $\nu(\alpha^{i-1}, \kurv, \alpha^{k-i}, \kurv, -) = 0$ for every $i$ in $\{1, \dots, k\}$.
    Since $\kurv$ is of odd degree, this implies that 
    \[\nu^{(i, k+1)}(\alpha^{i-1}, -, \alpha^{k-i}, -, -) = \nu(\alpha^{i-1}, -, \alpha^{k-i}, -, -) \]
    for every $i$ in $\{1, \dots, k\}$.
    Now we compute 
    \begin{align*}
        d_{\rmT} \left( \frac{(-1)^{|\mu|}}{k+1} \mu(\alpha^{k+1}, -) \right)  = \frac{(-1)^{|\nu|}}{k+1} d_{\rmT} (\nu(\alpha^{k+1}, -)) 
        & = \frac{1}{k+1} \sum_{i=1}^{k+1} \nu(\alpha^{i-1}, \kurv, \alpha^{k+1-i}, -) \\ 
        & = \frac{1}{k+1} \sum_{i=1}^{k+1} \nu^{(i,k+1)}(\alpha^k, \kurv, -) \\
        & = \nu(\alpha^k, \kurv, -) = \lambda.
    \end{align*}
    This finishes the proof.
\end{proof}

\begin{remark}
    The argument given in \cref{rmk:model-category-argument} applied in the symmetric case gives a more conceptual proof of the first part of \cref{dT homology symmetric}.
\end{remark}

The remainder of the proof of \cref{main theorem symmetric} works \emph{mutatis mutandis} as the one of \cref{main theorem}, modulo the addition of the condition that $\Phi$ is equivariant with respect to the symmetric groups action. 


\appendix 
\section{Twisting morphisms for $\cAinf$ and $\cLinf$}
\label{sec:appendix}

\begin{prop}
\label{prop:twist-morphism-ns}
    The formula \eqref{Maurer-Cartan map} defining $\tw_{\cAinf}$ gives a morphism of non-symmetric dg operads.
\end{prop}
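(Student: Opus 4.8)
The plan is to exploit that $\cAinf$ is quasi-free. Since it is the free operad on the generators $\mu_n$ ($n \ge 0$) equipped with the differential \eqref{cainf equation}, the assignment in \eqref{Maurer-Cartan map} extends uniquely to a morphism of graded operads $\eta \colon \cAinf \to \cAinf \, \hat{\vee} \, \rmT$; the images are infinite sums, but in each fixed $\alpha$-degree only finitely many terms occur, so they are bona fide elements of the completion. Writing
\[
\mu_n^\alpha \eqdef \sum_{r_0,\dots,r_n \ge 0} \mu_{n + r_0 + \cdots + r_n}(\alpha^{r_0}, -, \alpha^{r_1}, \dots, -, \alpha^{r_n}), \qquad \nu_0 \eqdef \kurv + \mu_0^\alpha, \qquad \nu_n \eqdef \mu_n^\alpha \ (n \ge 1),
\]
the morphism $\eta$ is one of \emph{dg} operads precisely when, in $(\cAinf \, \hat{\vee} \, \rmT, \, d_{\cAinf} + d_\rmT)$, one has $d(\nu_n) = -\sum_{p+q+r = n} \nu_{p+1+r} \circ_{p+1} \nu_q$ for every $n \ge 0$ (a morphism out of a quasi-free dg operad is dg iff it carries the generating relations to genuine relations). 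So the proof reduces to checking this family of identities.

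First I would settle $n = 0$. Here $d_{\cAinf}\mu_0 = -\mu_1 \circ_1 \mu_0$, so the right-hand side is $-\mu_1^\alpha \circ_1 (\kurv + \mu_0^\alpha)$, while $d(\nu_0) = d_{\cAinf}(\mu_0^\alpha) + d_\rmT(\mu_0^\alpha)$ since $d_\rmT \kurv = 0$. Expanding the derivation $d_{\cAinf}$ vertex-by-vertex on $\mu_0^\alpha = \sum_{k \ge 0} \mu_k(\alpha^k)$ and re-summing gives $d_{\cAinf}(\mu_0^\alpha) = -\mu_1^\alpha \circ_1 \mu_0^\alpha$; and because $\alpha$ has even degree and each $\mu_k$ odd degree, $d_\rmT(\mu_0^\alpha) = -\sum_{k \ge 1}\sum_{i=1}^k \mu_k(\alpha^{i-1}, \kurv, \alpha^{k-i}) = -\mu_1^\alpha \circ_1 \kurv$. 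Adding the two recovers the right-hand side.

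Then, for $n \ge 1$, separating the $q = 0$ summand and using $p+1+r \ge 1$, the right-hand side reads
\[
-\!\!\sum_{\substack{p+q+r = n \\ q \ge 1}} \mu_{p+1+r}^\alpha \circ_{p+1} \mu_q^\alpha \;-\; \sum_{p+r=n} \mu_{p+1+r}^\alpha \circ_{p+1} \mu_0^\alpha \;-\; \sum_{p+r=n} \mu_{p+1+r}^\alpha \circ_{p+1} \kurv,
\]
while $d(\nu_n) = d_{\cAinf}(\mu_n^\alpha) + d_\rmT(\mu_n^\alpha)$. The heart of the argument is a term-by-term matching. Expand $d_{\cAinf}(\mu_n^\alpha)$ by applying $d_{\cAinf}\mu_M = -\sum \mu_{a+1+c} \circ_{a+1} \mu_b$ at the root of each tree $\mu_M(\alpha^{r_0}, -, \dots, -, \alpha^{r_n})$ occurring in $\mu_n^\alpha$, and sort the resulting trees by how the $n$ inputs and the scattered $\alpha$'s split between the two new vertices: the trees in which $\mu_b$ receives $q \ge 1$ inputs reassemble to the first sum, and those in which $\mu_b$ receives only $\alpha$'s reassemble to the second sum. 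Separately, $d_\rmT(\mu_n^\alpha)$ converts one $\alpha$ into a $\kurv$ in each tree; grouping by the number of inputs lying to the left of that new $\kurv$ reassembles the result to the third sum.

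The only genuinely delicate point is sign bookkeeping: one must verify that the Koszul signs arising when $d_{\cAinf}$ and $d_\rmT$ cross the odd-degree $\mu$'s and the even-degree $\alpha$'s agree with the overall minus sign in \eqref{cainf equation}. Because $\alpha$ is even, this stays manageable — the only recurring sign is the $(-1)^{|\mu_M|} = -1$ incurred when $d_\rmT$ passes a generator $\mu_M$ — and I expect it to work out uniformly; together with making the "distribution of $\alpha$'s" bijections precise, this is the bulk of the otherwise routine computation. The symmetric analogue, that \eqref{symmetric Maurer-Cartan map} defines a dg morphism, is proved identically, inserting the shuffle sums and the $1/k!$ coefficients and tracking the $\Sym_n$-actions.
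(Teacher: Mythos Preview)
Your proposal is correct and follows essentially the same approach as the paper's proof: both reduce to checking the dg condition on generators, split $d(\eta(\mu_n))$ into the $d_{\cAinf}$-piece and the $d_{\rmT}$-piece, and then resum so that the $d_{\cAinf}$-piece yields $-\sum \mu_{p+1+r}^\alpha \circ_{p+1} \mu_q^\alpha$ while the $d_{\rmT}$-piece yields the $\kurv$-terms, the two together reconstituting $\eta(d_{\cAinf}\mu_n)$. Your explicit separation of the $n=0$ case and of the $q=0$ summand is slightly more spelled out than the paper's uniform treatment, but the computation is the same.
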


\begin{proof}
    Let us abbreviate $\tw_{\cAinf}$ by $\tw$. 
    We compute
    \begin{align*}
    d_{\cAinf \, \hat{\vee} \, \rmT}(\tw(\mu_n)) 
        &= 
        \sum_{i_0,\ldots,i_n} d_{\cAinf}(\mu_{i_0+\cdots + i_n+n})(\alpha^{i_0}, - , \alpha^{i_1}, - , \cdots, - , \alpha^{i_{n}}) \\
        &- \sum_{i_0,\ldots,i_n} \sum_{k=0}^{n}\sum_{i=1}^{i_k} \mu_{i_0+\cdots+i_n+n}(\alpha^{i_0},-,\cdots,\alpha^{i-1},\kurv,\alpha^{i_k-i},-,\cdots,\alpha^{i_n}) .        
    \end{align*}
    We treat the two sums on the right hand side separately.
      \begin{align*}
        & \sum_{i_0,\ldots,i_n} d_{\cAinf}(\mu_{i_0+\cdots + i_n+n})(\alpha^{i_0}, - , \alpha^{i_1}, - , \cdots, - , \alpha^{i_{n}}) \\
        & =  - \sum_{i_0,\ldots,i_n} 
        \sum_{\substack{p+q+r=\\ \sum i_k+n}}
        (\mu_{p+1+r} \circ_{p+1} \mu_q)(\alpha^{i_0}, - , \alpha^{i_1}, - , \cdots, - , \alpha^{i_{n}}) \\
        & =  - \sum_{\substack{i_0,\ldots,i_a,j_0,\ldots,j_b \\ a+b=n, a \geqslant 1}} 
        \sum_{\substack{p+q+r= \\ \sum i_k + \sum j_l+n}}
        \mu_{p+1+r}(\alpha^{i_0}, - , \cdots, - , \alpha^{i_{a}}) \circ_{p+1} \mu_q(\alpha^{j_0}, - , \cdots, - , \alpha^{j_{b}}) \\
        & =  -  
        \sum_{\substack{a+b=n\\ 1\leqslant c \leqslant a}}
        \left(
        \left(
        \sum_{i_0,\ldots,i_a}\mu_{\sum i_k+a}(\alpha^{i_0}, - , \cdots, - , \alpha^{i_{a}}) \right) \circ_{c} 
        \left(\sum_{j_0,\ldots,j_b}\mu_{\sum j_l+b}(\alpha^{j_0}, - , \cdots, - , \alpha^{j_{b}})\right)\right) \\
        & =  -  
        \sum_{p+q+r=n}
        \left(
        \left(
        \sum_{i_0,\ldots,i_{p+1+r}}\mu_{\sum i_k+p+1+r}(\alpha^{i_0}, - , \cdots, - , \alpha^{i_{p+1+r}}) \right) \circ_{p+1} 
        \left(\sum_{j_0,\ldots,j_q}\mu_{\sum j_l+q}(\alpha^{j_0}, - , \cdots, - , \alpha^{j_{q}})\right)\right)
    \end{align*}
    On the other side, we have
    \begin{eqnarray*}
        \sum_{i_0,\ldots,i_n} \sum_{k=0}^{n}\sum_{i=1}^{i_k} \mu_{i_0+\cdots+i_n+n}(\alpha^{i_0},-,\cdots,\alpha^{i-1},\kurv,\alpha^{i_k-i},-,\cdots,\alpha^{i_n}) 
         =  \sum_{0 \leqslant p \leqslant n} \tw(\mu_{n+1}) \circ_{p+1} \kurv.
    \end{eqnarray*}
    Combining the previous two computations, we get
    \begin{align*}
        d_{\cAinf \, \hat{\vee} \, \rmT}(\tw(\ell_n)) 
        =  -  
        \sum_{p+q+r=n}
        \tw(\mu_{p+1+r})\circ_{p+1} \tw(\ell_q)
        = \tw (d_{\cAinf}(\mu_n)),
    \end{align*}
    which finishes the proof.
\end{proof}

\begin{prop}
\label{prop:twist-morphism}
    The formula \eqref{symmetric Maurer-Cartan map} defining $\tw_{\cLinf}$ gives a morphism of symmetric dg operads.
\end{prop}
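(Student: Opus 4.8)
The statement is the symmetric analogue of \cref{prop:twist-morphism-ns}, so the plan is to mirror that proof, tracking the shuffle-symmetrization and the combinatorial factors $1/k!$. Writing $\tw$ for $\tw_{\cLinf}$, I would begin by computing $d_{\cLinf\,\hat\vee\,\rmT}(\tw(\ell_n))$ directly from the definition
\[
\tw(\ell_n) = \sum_{k\geq 0}\frac{1}{k!}\,\ell_{k+n}(\alpha^k,-),
\]
noting that the differential on $\cLinf\,\hat\vee\,\rmT$ splits as the internal $d_{\cLinf}$ applied to each $\ell_{k+n}$ plus the $d_\rmT$-part coming from $d_\rmT\alpha=\kurv$. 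This produces two sums, exactly as in the non-symmetric case, which I would treat separately.

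\textbf{The $d_{\cLinf}$-term.} Expanding $d_{\cLinf}(\ell_{k+n})$ via \eqref{clinf differential} gives a double sum over $p+q=k+n$ and shuffles $\sigma\in\Sh_{p,q}^{-1}$ of terms $(\ell_{p+1}\circ_1\ell_q)^\sigma(\alpha^k,-)$. The key combinatorial point is to redistribute the $k$ copies of $\alpha$ among the two factors: a given shuffle sends some number $a$ of the $\alpha$-slots into the inner operation $\ell_q$ and the remaining $k-a$ into the outer $\ell_{p+1}$, and because $\alpha$ has even degree and the $\ell_n$ are $\mathbb{S}_n$-invariant, the sum over shuffles collapses — the binomial coefficient $\binom{k}{a}$ exactly converts the factor $\frac{1}{k!}$ into $\frac{1}{a!}\cdot\frac{1}{(k-a)!}$, matching $\frac{1}{a!}\ell_{a+q}(\alpha^a,-)$ composed with $\frac{1}{(k-a)!}\ell_{(k-a)+p}(\alpha^{k-a},-)$. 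Reindexing by $p'+q'=n$ then yields $-\sum_{p+q=n}\sum_{\sigma\in\Sh_{p,q}^{-1}}(\tw(\ell_{p+1})\circ_1\tw(\ell_q))^\sigma$, which is precisely $\tw(d_{\cLinf}(\ell_n))$ by the (multiplicativity of $\tw$ on composites and the) definition of the differential.

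\textbf{The $d_\rmT$-term.} Here I would show that $\sum_k \frac{1}{k!}\sum_{i=1}^{k}\ell_{k+n}(\alpha^{i-1},\kurv,\alpha^{k-i},-)$ assembles into $-[\tw(\ell_{n+1}),\kurv]$, or more precisely into the contribution of $\tw(\ell_0)=\kappa+\sum_k\frac{1}{k!}\ell_k(\alpha^k)$ sitting inside $\tw(\ell_{p+1}\circ_1\ell_q)$ with $q=0$ in the expanded formula \eqref{clinf differential}. The $\mathbb{S}_{k+n}$-invariance of $\ell_{k+n}$ makes each of the $k$ terms $\ell_{k+n}(\alpha^{i-1},\kurv,\dots)$ equal, producing a factor $k$ that cancels against $\frac{1}{k!}$ down to $\frac{1}{(k-1)!}$; summing over $k\geq 1$ reconstructs exactly $\tw(\ell_{n+1})\circ_1\kappa$, accounting for the $q=0$ summand on the right-hand side of $\tw(d_{\cLinf}(\ell_n))$. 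Adding this to the $d_{\cLinf}$-term gives the full sum $-\sum_{p+q=n,\,p,q\geq 0}\sum_\sigma(\tw(\ell_{p+1})\circ_1\tw(\ell_q))^\sigma = \tw(d_{\cLinf}(\ell_n))$. Finally I would remark that $\tw(\ell_n)$ is $\mathbb{S}_n$-invariant (the last $n$ slots are permuted freely and $\alpha$ is even), so $\tw$ respects the symmetric group actions and is a genuine morphism of symmetric dg operads.

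\textbf{Main obstacle.} The routine-but-delicate part is bookkeeping the shuffle sums and the factorial weights in the $d_{\cLinf}$-term: one must check that summing $(\ell_{p+1}\circ_1\ell_q)^\sigma$ over $\sigma\in\Sh_{p,q}^{-1}$ after inserting $\alpha^k$'s, then symmetrizing, yields precisely the shuffle sum defining $\tw(\ell_{p'+1})\circ_1\tw(\ell_{q'})$ with the correct $\frac{1}{a!(k-a)!}$ split and no spurious signs — the even degree of $\alpha$ and the total invariance of the generators $\ell_m$ are what make this work, and these should be invoked explicitly at each reindexing step.
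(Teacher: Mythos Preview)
Your plan is correct and follows essentially the same approach as the paper's proof: split $d_{\cLinf\hat\vee\rmT}(\tw(\ell_n))$ into the $d_{\cLinf}$-piece and the $d_\rmT$-piece, use the binomial identity $\frac{1}{k!}\binom{k}{j}=\frac{1}{j!(k-j)!}$ together with $\mathbb{S}$-invariance of the $\ell_m$ and evenness of $\alpha$ to redistribute the inserted $\alpha$'s, and use invariance again to collapse the $d_\rmT$-piece to $\tw(\ell_{n+1})\circ_1\kappa$. One small wording fix: the $d_{\cLinf}$-term alone does \emph{not} yet yield $\tw(d_{\cLinf}(\ell_n))$, because for $q=0$ it only produces the $\sum_j\frac{1}{j!}\ell_j(\alpha^j)$ part of $\tw(\ell_0)$ and not the $\kappa$ --- that missing $\kappa$ is exactly what the $d_\rmT$-term supplies, as you correctly say in the following paragraph.
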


\begin{proof}
    Let us abbreviate $\tw_{\cLinf}$ by $\tw$. 
    We compute
    \[
        d_{\cLinf \, \hat{\vee} \, \rmT}(\tw(\ell_n)) 
        = \sum_{k \geqslant 0}\frac{1}{k!} d_{\cLinf}(\ell_{k+n})(\alpha^k,-) - \sum_{k \geqslant 1} \frac{1}{k!} \sum_{i=1}^{k} \ell_{k+n}(\alpha^{i-1},\kurv,\alpha^{k-i},-) .
    \]
    We treat the two sums on the right hand side separately. 
    On the one side, we have 
    \begin{eqnarray*}
        \sum_{k \geqslant 0} \frac{1}{k!} d_{\cLinf}(\ell_{k+n})(\alpha^k,-) & = & - \sum_{k \geqslant 0} \frac{1}{k!} \sum_{\substack{p+q=k+n \\ q \geqslant 0}}\sum_{\sigma \in \Sh_{p,q}^{-1}}(\ell_{p+1} \circ_1 \ell_q)^{\sigma}(\alpha^k,-) \\
        & = & - \sum_{k \geqslant 0}\frac{1}{k!} 
        \sum_{p+q=k+n}
        \sum_{\substack{h+j = k \\ h \leqslant p, j\leqslant q}}\binom{k}{j}
        \sum_{\tau \in \Sh_{p-h,q-j}^{-1}}[(\ell_{p+1} \circ_{1} \ell_q)(\alpha^j,-,\alpha^h,-)]^{\tau} \\
        & = & - \sum_{k \geqslant 0}
        \sum_{p+q=k+n}
        \sum_{\substack{h+j = k \\ h \leqslant p, j\leqslant q}}\frac{1}{k!} \binom{k}{j}
        \sum_{\tau \in \Sh_{p-h,q-j}^{-1}}[(\ell_{p+1}^{(1 \ h+1)} \circ_{1} \ell_q)(\alpha^j,-,\alpha^h,-)]^{\tau} \\
        & = & - \sum_{k \geqslant 0}\sum_{h+j=k}\frac{1}{h!j!} 
    \sum_{\substack{p+q=k+n \\ p\geqslant h, q  \geqslant j}}
        \sum_{\tau \in \Sh_{p-h,q-j}^{-1}}[(\ell_{p+1} \circ_{h+1} \ell_q)(\alpha^k,-)]^{\tau} \\
        & = & - \sum_{h,j\geqslant 0}\frac{1}{h!j!} 
        \sum_{\substack{p-h+q-j=n \\ p-h, q-j \geqslant 0}}
        \sum_{\tau \in \Sh_{p-h,q-j}^{-1}}[\ell_{p+1}(\alpha^h,-) \circ_{1} \ell_q(\alpha^j,-)]^{\tau} \\
        & = & -  
        \sum_{a+b=n}
        \sum_{\tau \in \Sh_{a,b}^{-1}}
        \left(
        \left(
        \sum_{h\geqslant 0}\frac{1}{h!}\ell_{h+a+1}(\alpha^h,-) \right) \circ_{1} 
        \left(\sum_{j\geqslant 0}\frac{1}{j!}\ell_{j+b}(\alpha^j,-)\right)\right)^{\tau} 
    \end{eqnarray*}
    Here, we have used that the number of $(h,j)$-unshuffles of $h+j=k$ entries is given by~$\binom{k}{j}$.
    On the other side, we have
    \begin{eqnarray*}
        \sum_{k \geqslant 1} \frac{1}{k!} \sum_{i=1}^{k} \ell_{k+n}(\alpha^{i-1},\kurv,\alpha^{k-i},-) & = & \sum_{k \geqslant 1} \frac{1}{(k-1)!} \ell_{k+n}(\alpha^{k-1},\kurv,-) \\
        & = & \sum_{k \geqslant 0} \frac{1}{k!} \ell_{k+1+n}(\alpha^k,\kurv,-) \\
        & = & \tw(\ell_{n+1}) \circ_1 \kurv.
    \end{eqnarray*}
    Combining the previous two computations, we get
    \begin{align*}
        &d_{\cLinf \, \hat{\vee} \, \rmT}(\tw(\ell_n)) \\
        & =  \sum_{k \geqslant 0}\frac{1}{k!} d_{\cLinf}(\ell_{k+n})(\alpha^k,-) - \sum_{k \geqslant 1} \sum_{i=1}^{k} \frac{1}{k!} \ell_{k+n}(\alpha^{i-1},\kurv,\alpha^{k-i},-) \\
         & =  -  
        \sum_{a+b=n}
        \sum_{\tau \in \Sh_{a,b}^{-1}}
        \left(
        \left(
        \sum_{h\geqslant 0}\frac{1}{h!}\ell_{h+a+1}(\alpha^h,-) \right) \circ_{1} 
        \left(\sum_{j\geqslant 0}\frac{1}{j!}\ell_{j+b}(\alpha^j,-)\right)\right)^{\tau} - \tw(\ell_{n+1}) \circ_1 \kurv\\
        & =  -  
        \sum_{a+b=n}
        \sum_{\tau \in \Sh_{a,b}^{-1}}(\tw(\ell_{a+1})\circ_1 \tw(\ell_b))^\tau \\
        &= \tw (d_{\cLinf}(\ell_n)),
    \end{align*}
    which finishes the proof.
\end{proof}


\bibliography{uncurving}

\end{document}